 \newtheorem{Theorem}{Theorem}[section]
 \newtheorem{Corollary}[Theorem]{Corollary}
 \newtheorem{Lemma}[Theorem]{Lemma}
 \newtheorem{Question}[Theorem]{Question}
 \newtheorem{Conjecture}[Theorem]{Conjecture}
 \newtheorem{Remark}[Theorem]{Remark}
 \numberwithin{equation}{section}
\begin{document}

\title[A remark on a generalized Suita conjecture for finite points case]
 {A Remark on a generalized Suita conjecture for finite points case}

\author{Qi'an Guan}
\address{Qi'an Guan: School of Mathematical Sciences, Peking University, Beijing 100871, China.}
\email{guanqian@math.pku.edu.cn}

\author{Xun Sun}
\address{Xun Sun: School of Mathematical Sciences, Peking University, Beijing 100871, China.}
\email{sunxun@stu.pku.edu.cn}

\author{Zheng Yuan}
\address{Zheng Yuan: State Key Laboratory of Mathematical Sciences, Academy of Mathematics and Systems Science, Chinese Academy of Sciences, Beijing 100190, China.}
\email{yuanzheng@amss.ac.cn}

\thanks{}

\subjclass[2020]{32D15, 30F15, 32F30, 32U05}

\keywords{open Riemann surface, Green function, planar domain, Suita conjecture}

\date{}

\dedicatory{}

\commby{}

%%% ----------------------------------------------------------------------

\begin{abstract}
In this article, we use a class of harmonic functions (maybe multi-valued) to study the equality part in a weighted version of Suita conjecture for higher derivatives and finite points case, and we obtain some sufficient and necessary conditions for the equality part to hold when the harmonic part $u$ of the weight is trivial.
\end{abstract}

%%% ----------------------------------------------------------------------
\maketitle
%%% ----------------------------------------------------------------------

\section{introduction}
Let $\Omega$ be an open Riemann surface which admits a nontrivial Green function $G_{\Omega}$. Let $z_0$ be a point in $\Omega$, and let $w$ be a local coordinate on a neighborhood $V_{z_0}\Subset \Omega$ of $z_0$ satisfying that $w(z_0)=0$. 
Then the logarithmic capacity (see \cite{L-K}) of $z_0$ is locally defined by 
\[c_{\beta}(z_0):=\exp\lim_{z\rightarrow z_0}(G_{\Omega}(z,z_0)-\log|w(z)|),\]
and the analytic capacity (see \cite{L-K}) of $z_0$ is defined by
\[c_B (z_0):=\sup\{|f'(z_0)|:f\in \mathcal{O}(\Omega), |f|\le 1, f(z_0)=0\}.\]
Let $B_{\Omega}(z_0)$ be the Bergman kernel function on $\Omega$, which is defined by 
\[B_{\Omega}(z_0):=\frac{2}{\inf\{\int_{\Omega}|\widetilde{F}|^2:\widetilde{F}\in H^0(\Omega,\mathcal{O} (K_{\Omega}))\&(\widetilde{F}-dw)\in(\mathcal{I}(2G_{\Omega}(\cdot,z_0))\otimes \mathcal{O} (K_{\Omega}))_{z_0} \}},\]
where $|\widetilde{F}|^2:=\sqrt{-1}\widetilde{F}\wedge\bar{\widetilde{F}}$ for any holomorphic $(1,0)$ form $\widetilde{F}$ on $\Omega$. 

In \cite{L-K}, Sario and Oikawa posed an open question: \emph{find a relation between the magtitudes of the quantities $\sqrt{\pi B_{\Omega}(z_0)}$, $c_{\beta}(z_0)$ and $c_B(z_0)$.}
 
As $G_{\Omega}(z,z_0)=\sup_{v\in\Delta_0(z_0)}v(z)$ (see \cite{L-K}), where $\Delta_0(z_0)$ is the set of negative subharmonic functions $v$ on $\Omega$ satisfying that $v-\log|w|$ has a locally finite upper bound near $z_0$, we know that $c_{\beta}(z_0)\ge c_B(z_0)$. In \cite{suita}, Suita  proved that
 \begin{Theorem}[\cite{suita}]
  $\pi B_{\Omega}(z_0)\ge (c_{B}(z_0))^2$  holds for any $z_0\in\Omega$, and the equality holds if and only if $\Omega$ is conformally equivalent to the unit disc less a (possible) closed set of inner capacity zero. 	
 \end{Theorem}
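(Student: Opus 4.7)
My plan is to produce an admissible competitor for the infimum defining $B_\Omega(z_0)$ out of the Ahlfors extremal for $c_B(z_0)$, and then extract rigidity from the equality case. Let $f\in\mathcal O(\Omega)$ achieve (or approximate) $c_B(z_0)$: $|f|\le 1$, $f(z_0)=0$, $f'(z_0)=c_B(z_0)$. The holomorphic $(1,0)$-form $\widetilde F := df/c_B(z_0)$ has $1$-jet $\widetilde F - dw = O(w)\,dw$ at $z_0$, and since $G_\Omega(\cdot,z_0) = \log|w|+O(1)$ near $z_0$, this places $\widetilde F - dw$ in the stalk $(\mathcal I(2G_\Omega(\cdot,z_0))\otimes \mathcal O(K_\Omega))_{z_0}$, so $\widetilde F$ is admissible. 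The inequality $\pi B_\Omega(z_0)\ge c_B(z_0)^2$ then reduces to the area bound
\[\int_\Omega \sqrt{-1}\, df\wedge d\bar f \;\le\; 2\pi,\]
i.e.\ the image area of $f$ counted with multiplicity is at most $\pi$. This is immediate when $f$ is univalent (since $f(\Omega)$ is contained in the unit disc $\mathbb D$); the multi-sheeted case is the main technical point. Following Suita, one can exhaust $\Omega$ by relatively compact bordered subdomains and express $c_B(z_0)$ and $B_\Omega(z_0)$ on each via boundary integrals involving the Green function and the Szeg\H{o} kernel, then derive the bound by Cauchy--Schwarz on $\partial\Omega$. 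Alternatively, one passes to the universal cover $p\colon\mathbb D\to\Omega$ (which exists because $\Omega$ carries a Green function) and compares Poincar\'e series for the deck group.

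\textbf{Necessity in the equality case.} If $\pi B_\Omega(z_0)=c_B(z_0)^2$, then $\widetilde F$ realises the infimum, and by uniqueness of the $L^2$-minimiser the area bound above must be saturated. This forces $f$ to be univalent with $f(\Omega)\subset\mathbb D$ of Lebesgue measure exactly $\pi$, so $E := \mathbb D\setminus f(\Omega)$ is a closed subset of $\mathbb D$ of Lebesgue measure zero. To upgrade to ``inner capacity zero'' I argue by contradiction: positive inner capacity of $E$ would produce a non-constant bounded holomorphic function on $\mathbb D\setminus E\cong\Omega$ not extending across $E$, and composing with $f$ (suitably normalised) would yield a strictly better competitor for the Ahlfors problem at $z_0$, contradicting extremality of $f$.

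\textbf{Sufficiency in the equality case.} Conversely, if $\Omega$ is conformally $\mathbb D\setminus E$ with $E$ closed of inner capacity zero, then bounded holomorphic functions extend across $E$ by Painlev\'e-type removability, so the analytic capacities on $\Omega$ and on $\mathbb D$ agree; similarly $L^2$-holomorphic $(1,0)$-forms extend across $E$, so the Bergman kernels agree. A direct computation on $\mathbb D$ yields $\pi B_{\mathbb D}(0)=1=c_B(0)^2$, completing the proof. The principal obstacles are the multi-sheet control in the area bound and the passage from ``Lebesgue measure zero'' to ``inner capacity zero''.
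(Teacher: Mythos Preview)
The paper does not prove this theorem at all: it is quoted verbatim from Suita's 1972 paper \cite{suita} as background, with no argument supplied. So there is no ``paper's own proof'' to compare against; I can only assess your proposal on its merits.

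Your competitor approach has a genuine gap at the very first step. The area inequality
\[
\int_\Omega |f'|^2\,dA \le \pi
\]
for the Ahlfors function $f$ is \emph{false} on multiply connected domains. On a bordered $\Omega$ of connectivity $n\ge 2$ with analytic boundary, the Ahlfors function is a proper $n$-to-$1$ branched covering onto $\mathbb D$, so the left side equals $n\pi$, not $\pi$. Thus $\widetilde F=df/c_B(z_0)$ is an admissible competitor but gives only $\inf \le 2n\pi/c_B(z_0)^2$, yielding $\pi B_\Omega(z_0)\ge c_B(z_0)^2/n$, which is too weak. Your acknowledgement that ``the multi-sheeted case is the main technical point'' is an understatement: it is the whole point, and your fallback (``following Suita, exhaust and use boundary integrals with the Szeg\H{o} kernel'') is a citation rather than an argument. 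Suita's actual route goes through Garabedian's identity $c_B(z_0)=2\pi \hat K(z_0,\bar z_0)$ for the Szeg\H{o} kernel and a separate comparison between the Bergman and Szeg\H{o} kernels; the Ahlfors function is not used as a Bergman competitor.

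The equality analysis inherits this defect. You write ``then $\widetilde F$ realises the infimum'', but once the area bound fails there is no reason $\int_\Omega|\widetilde F|^2$ should equal $2\pi/c_B(z_0)^2$; the Bergman minimiser is a different form. Consequently your deduction that $f$ is univalent with $|\mathbb D\setminus f(\Omega)|=0$ has no foundation. Even granting univalence, the step ``Lebesgue measure zero $\Rightarrow$ inner capacity zero'' via a better Ahlfors competitor is sketched too loosely: you would need to produce an explicit $g\in\mathcal O(\Omega)$ with $|g|\le 1$, $g(z_0)=0$ and $|g'(z_0)|>c_B(z_0)$, and positive capacity of $E$ alone does not hand you such a $g$. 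Suita's equality argument instead tracks when the Cauchy--Schwarz step in the Bergman--Szeg\H{o} comparison is tight, which forces the Szeg\H{o} and Garabedian kernels to be proportional and hence $\chi_{z_0}=1$; this is what yields the removability conclusion.
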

 
For the relation between $B_{\Omega}(z_0)$ and $c_{\beta}(z_0)$, Suita \cite{suita} proved $\pi B_{\Omega}(z_0)>(c_{\beta}(z_0))^2$
when $\Omega$ is a doubly connected domain with no degenerate boundary component, and posed the following conjecture (so called \textbf{Suita Conjecture}).
\begin{Conjecture}[\cite{suita}]
Let $\Omega$ be any open Riemann surface which admits a nontrivial Green function $G_{\Omega}$.
 $\pi B_{\Omega}(z_0)\ge (c_{\beta}(z_0))^2$  holds for any $z_0\in\Omega$, and the equality holds if and only if $\Omega$ is conformally equivalent to the unit disc less a (possible) closed set of inner capacity zero. 
\end{Conjecture}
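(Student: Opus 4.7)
The plan is to derive both parts of the Suita conjecture from an optimal-constant $L^2$ extension theorem of Ohsawa-Takegoshi type (established by B\l ocki in the planar case and by Guan-Zhou on general open Riemann surfaces admitting a nontrivial Green function). The key local fact driving the proof is the expansion $G_{\Omega}(z,z_0) = \log|w(z)|+\log c_{\beta}(z_0)+o(1)$ near $z_0$, which identifies $c_{\beta}(z_0)^{-2}$ as the local residue of $e^{-2G_{\Omega}(\cdot,z_0)}$ at $z_0$; this number is exactly what will propagate into the sharp $L^2$ bound.

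For the inequality, I would exhaust a neighborhood of $z_0$ by the sublevel sets $\Omega_t := \{G_{\Omega}(\cdot,z_0)<-t\}$ for large $t$ (which, in the local coordinate $w$, are approximately round disks of area $\pi c_{\beta}(z_0)^{-2}e^{-2t}$) and apply the sharp $L^2$ extension from $\{z_0\}$ to $\Omega$ with weight $2G_{\Omega}(\cdot,z_0)$, appropriately regularized through the exhaustion. The output is a holomorphic $(1,0)$-form $F$ on $\Omega$ with $F-dw\in(\mathcal{I}(2G_{\Omega}(\cdot,z_0))\otimes\mathcal{O}(K_{\Omega}))_{z_0}$ and
\[ \int_{\Omega} |F|^2 \le \frac{2\pi}{c_{\beta}(z_0)^2}. \]
Inserting $F$ as a competitor in the variational definition of $B_{\Omega}(z_0)$ immediately yields $\pi B_{\Omega}(z_0)\ge c_{\beta}(z_0)^2$.

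For the equality case, assume $\pi B_{\Omega}(z_0) = c_{\beta}(z_0)^2$. Then the form $F$ above is the Bergman extremal (up to scalar) and the sharp $L^2$ bound is saturated on every sublevel set $\Omega_t$. Tracking the rigidity in the Guan-Zhou argument, saturation arises from a Cauchy-Schwarz step that is an equality only when the extremal form coincides with a specific model form built from $G_{\Omega}$; this forces the pointwise identity
\[ \sqrt{-1}\,F\wedge \overline{F} = \frac{2}{c_{\beta}(z_0)^2}\,e^{2G_{\Omega}(\cdot,z_0)}\,dA \]
on all of $\Omega$ for a fixed area form $dA$ compatible with $F$. Equivalently, a local primitive of $F$ near $z_0$ extends to an a priori multi-valued holomorphic function $\Phi$ on $\Omega$ with $|\Phi|=e^{G_{\Omega}(\cdot,z_0)}$ and $\Phi(z_0)=0$. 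Showing that the period integrals of $F$ along every loop in $\Omega$ vanish makes $\Phi$ single-valued, so that $\Phi:\Omega\to \Delta$ is a well-defined holomorphic map; the uniqueness of the Green function on $\Delta$ and a pushforward argument then show that $\Phi$ is injective off a set of inner capacity zero, realizing $\Omega$ conformally as $\Delta\setminus E$ for such an $E$.

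The decisive obstacle is the equality characterization. The inequality, once the sharp extension theorem is available, is essentially a bookkeeping calculation on the exhaustion $\{\Omega_t\}$. The rigidity, by contrast, requires (i) upgrading equality in an approximation/limit scheme to a pointwise identity on all of $\Omega$, (ii) proving that the multi-valued ``exponential of $G_{\Omega}(\cdot,z_0)$'' has trivial monodromy on a surface that may carry nontrivial topology, and (iii) quantifying the exceptional set where $\Phi$ fails to be injective. Step (ii) is the genuine difficulty: equality in Suita must \emph{a posteriori} force the homological complexity of $\Omega$ visible to $G_{\Omega}(\cdot,z_0)$ to be invisible, and extracting a genuine single-valued conformal map from an abstract open Riemann surface with potentially rich topology is the core technical challenge.
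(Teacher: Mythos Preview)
The paper does not contain a proof of this statement. Conjecture 1.2 is presented as historical background: it is Suita's original conjecture from \cite{suita}, and immediately after stating it the paper simply records that the inequality was settled by B\l ocki \cite{Blocki} (planar case) and Guan--Zhou \cite{G-ZhouL2_CR} (general open Riemann surfaces), and that the equality characterization was completed by Guan--Zhou \cite{guan-zhou13ap} via their optimal $L^2$ extension theorem. No argument is reproduced here; the paper's own contributions begin with Theorem \ref{thm6} and concern the \emph{generalized} (weighted, finite-points, higher-derivative) version.

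Your proposal is therefore not comparable to anything in this paper, but it is a fair high-level summary of the Guan--Zhou strategy from \cite{guan-zhou13ap}. A few points where your sketch is loose relative to how that argument actually runs: the rigidity step does not proceed by showing that period integrals of $F$ vanish and then arguing injectivity of a primitive $\Phi$. Rather, equality forces the multiplicative character $\chi_{z_0}$ associated to $G_{\Omega}(\cdot,z_0)$ to be trivial, i.e.\ there is a single-valued holomorphic $f_{z_0}\in\mathcal{O}(\Omega)$ with $|f_{z_0}|=e^{G_{\Omega}(\cdot,z_0)}$; Suita \cite{suita} already observed that $\chi_{z_0}=1$ is equivalent to $\Omega$ being conformally the disc minus a polar set. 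So step (ii) in your outline is handled through characters rather than through a direct period computation, and step (iii) is absorbed into Suita's classical characterization rather than argued separately. Your identification of the equality case as the decisive obstacle is correct.
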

In fact, a closed set of inner capacity zero is a polar set (locally singularity set of a subharmonic function).

The inequality part of Suita conjecture for bounded planar domains was proved by Blocki \cite{Blocki}, and the inequality part of Suita conjecture for open Riemann surfaces 
was proved by Guan-Zhou \cite{G-ZhouL2_CR}. In \cite{guan-zhou13ap}, Guan-Zhou established an optimal $L^2$ extension theorem in a general setting, and as an application, they proved the equality part of Suita conjecture, which completed the proof of Suita conjecture.

There are many generalized Suita conjectures. In \cite{yamada}, Yamada posed a harmonic weight version of Suita conjecture (called extended Suita conjecture), which was proved by Guan-Zhou in \cite{guan-zhou13ap}. In \cite{guan-mi-peking}, Guan-Mi proved a subharmonic weight version of Suita conjecture by using a concavity property of minimal $L^2$ integrals (see \cite{guan_general concave,guan-mi-peking}) and the solution of extended Suita conjecture, and the case that weights may not be subharmonic was proved by Guan-Yuan \cite{GY-concavity}. In \cite{G-M-Y}, Guan-Mi-Yuan established a concavity property of minimal $L^2$ integrals with Lebesgue measurable gain on weakly pseudoconvex K\"ahler manifolds, and proved a weighted Suita conjecture for higher derivatives. After that, we \cite{G-S-Y} gave some results on the set of points for the holding of the equality in a weighted version of Suita conjecture for higher derivatives, which is highly related to the Dirichlet problem when $\Omega$ is a planar domain bounded by finite analytic closed curves.

Note that the above generalized Suita conjectures are corresponding to the optimal $L^2$ extension problem from a single point to open Riemann surfaces. Thus, it is a nature question that \textbf{can one prove a generalized Suita conjecture for analytic subsets case?} 

Without using the solution of (extended) Suita conjecture, Guan-Yuan \cite{G-Y2} provided a characterization of the holding of equality in optimal jets $L^2$ extension problem from arbitrary analytic subsets to open Riemann surfaces, which proved a weighted version of Suita conjecture for higher derivatives and analytic subsets case. 

In this article, we focus on  the equality part in the weighted version of Suita conjecture for higher derivatives and finite points case, and we further discuss the conditions under which the equality holds.

We recall some notations (see \cite{OF81}, see also \cite{guan-zhou13ap,G-Y2}). Let $p:\Delta\rightarrow\Omega$ be the universal covering from the unit disc $\Delta$ to $\Omega$. We call the holomorphic function $f$ on $\Delta$ a
multiplicative function, if there is a character $\chi$, which is the representation of the fundamental group of $\Omega$, such that $g^*f=\chi(g)f$, where $|\chi|=1$ and $g$ is an element of the fundamental group of $\Omega$.
Denote the set of such kinds of $f$ by $\mathcal{O}^{\chi}(\Omega)$.  

It is known that for any harmonic function $u$ on $\Omega$,
there exists a $\chi_{u}$ and a multiplicative function $f_u\in\mathcal{O}^{\chi_u}(\Omega)$,
such that $|f_u|=p^{*}\left(e^{u}\right)$.
If $u_1-u_2=\log|f|$, then $\chi_{u_1}=\chi_{u_2}$,
where $u_1$ and $u_2$ are harmonic functions on $\Omega$ and $f$ is a holomorphic function on $\Omega$.
Recall that for the Green function $G_{\Omega}(z,z_0)$,
there exist a $\chi_{z_0}$ and a multiplicative function $f_{z_0}\in\mathcal{O}^{\chi_{z_0}}(\Omega)$ such that $|f_{z_0}(z)|=p^{*}\left(e^{G_{\Omega}(z,z_0)}\right)$, and $\chi_{z_0}=1$ holds if and only if $\Omega$ is conformally equivalent to the unit disc less a (possible) closed set of inner capacity zero (see \cite{suita}).

Let 
$Z_{0}:=\{z_1,z_2,\ldots,z_m\}\subset\Omega$
be a subset of $\Omega$ satisfying that 
$z_j\neq z_k$ for $j\neq k$. Let $w_j$ be a local coordinate on a neighborhood $V_{z_j}\Subset\Omega$ of $z_j$ satisfying that $w_j(z_j)=0$ for $j\in\{1,2,\ldots,m\}$, where $V_{z_j}\cap V_{z_k}=\emptyset$ for any $z_j\neq z_k$.
Let $c_{\beta}(z_j)$ be the logarithmic capacity  on $\Omega$, which is locally defined by 
\[c_{\beta}(z_j):=\exp\lim_{z\rightarrow z_j}(G_{\Omega}(z,z_j)-\log|w_j(z)|).\]

Now we consider the weighted Bergman kernel function for higher derivatives and finite points. 
Let $k_j$ be a nonnegative integer for $j\in\{1,2,\ldots,m\}$, $a_j$ be a nonzero complex number for $j\in\{1,2,\ldots,m\}$.  
Denote by $\mathcal{Z}:=(z_1,\ldots,z_m)$, $\mathcal{K}:=(k_1,\ldots,k_m)$, $\mathcal{A}:=(a_1,\ldots,a_m)$. 
Let $f$ be a holomorphic $(1,0)$ form on $V_0:=\cup_{i=1}^m V_{z_i}$ satisfying that $f=a_jw_j^{k_j}dw_j$ on $V_{z_j}$. Denote by
\begin{equation}
	\begin{split}
		&B_{\Omega,\rho}^{\mathcal{K}}(\mathcal{Z},\mathcal{A}):=\\
		&\frac{2}{\inf\{\int_{\Omega}|\widetilde{F}|^2\rho :\widetilde{F}\in H^{0}(\Omega,\mathcal{O}(K_{\Omega}))\& (\widetilde{F}-f)\in (\mathcal{I}(2(k_j+1)G_{\Omega}(\cdot,z_j))\otimes\mathcal{O}(K_{\Omega}))_{z_j}\,\forall j\}},
	\end{split}
	\nonumber
\end{equation}
where $\rho$ is a nonnegative Lebesgue measurable function on $\Omega$ and $|\widetilde{F}|^2:=\sqrt{-1}\widetilde{F}\wedge\bar{\widetilde{F}}$ for any holomorphic $(1,0)$ form $\widetilde{F}$ on $\Omega$. 

Let $c(t)$ be a positive measurable function on $(0,+\infty)$ satisfying that $c(t)e^{-t}$ is decreasing on $(0,+\infty)$ and $\int_0^{+\infty}c(s)e^{-s}ds<+\infty$. 
Denote by
\[\rho=e^{-(2\sum_{j=1}^m(k_j+1-p_j)G_{\Omega}(\cdot,z_j)+2v)}c(-\sum_{j=1}^m2p_jG_{\Omega}(\cdot,z_j)),\]
where $p_j>0$ is a constant for any $j\in\{1,2,\ldots,m\}$ and $v$ is a subharmonic function on $\Omega$. Denote by $\alpha_j:=\sum_{1\le l\le m,l\not=j}(k_l+1)G_{\Omega}(z_j,z_l)+v(z_j)$. Let us recall the solution of a weighted version of Suita conjecture for higher derivatives and finite points case.

\begin{Theorem}[\cite{G-Y2}]
\label{thm6}
Suppose that $v(z_j)>-\infty$ for $1\le j\le m$. Then
\[1\le \left(\int_0^{+\infty}c(s)e^{-s}ds\right)\left(\sum_{j=1}^m\frac{\pi|a_j|^2e^{-2\alpha_j}}{p_jc_{\beta}(z_j)^{2(k_j+1)}}\right)B_{\Omega,\rho}^{\mathcal{K}}(\mathcal{Z},\mathcal{A}).\]
holds. Moreover, equality
\begin{equation}
\label{1}1= \left(\int_0^{+\infty}c(s)e^{-s}ds\right)\left(\sum_{j=1}^m\frac{\pi|a_j|^2e^{-2\alpha_j}}{p_jc_{\beta}(z_j)^{2(k_j+1)}}\right)B_{\Omega,\rho}^{\mathcal{K}}(\mathcal{Z},\mathcal{A})
\end{equation} 
holds if and only if the following statements hold:

$(1)$ $v=\log|g|+u$, where $g$ is a holomorphic function such that $g(z_j)\neq 0$ for each $j\in\{1,2,\ldots,m\}$ and $u$ is a harmonic function on $\Omega$;

$(2)$ $\chi_{-u}=\prod_{1\le j\le m}\chi_{z_j}^{k_j+1}$, where $\chi_{-u}$ and $\chi_{z_j}$ are the characters associated to the functions $-u$ and $G_{\Omega}(\cdot,z_j)$ respectively;

$(3)$ $\lim_{z\rightarrow z_k}\frac{f}{gp_*(f_u(\prod_{1\le j\le m}f_{z_j}^{k_j+1})(\sum_{1\le j\le m}p_j\frac{df_{z_j}}{f_{z_j}}))}=c_0$ for any $k\in\{1,2,\ldots,m\}$, where $c_0\in\mathbb{C}\backslash\{0\}$ is a constant independent of $k$.
\end{Theorem}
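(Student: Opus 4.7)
The plan is to derive Theorem~\ref{thm6} by applying the concavity property of minimal $L^2$ integrals with Lebesgue measurable gain from \cite{G-M-Y} to the negative subharmonic function $\psi := \sum_{j=1}^{m}2p_jG_\Omega(\cdot,z_j)$ on $\Omega$, with weight $e^{-\varphi}$ where $\varphi := \sum_{j=1}^{m}2(k_j+1-p_j)G_\Omega(\cdot,z_j)+2v$, so that $\rho = e^{-\varphi}c(-\psi)$. For $t\ge 0$ let $G(t)$ denote the infimum of $\int_{\{\psi<-t\}}|\widetilde F|^2\rho$ over holomorphic $(1,0)$-forms $\widetilde F$ on $\{\psi<-t\}$ satisfying $(\widetilde F-f)\in\bigl(\mathcal I(2(k_j+1)G_\Omega(\cdot,z_j))\otimes\mathcal O(K_\Omega)\bigr)_{z_j}$ for every $j$, so that $G(0)=2/B_{\Omega,\rho}^{\mathcal K}(\mathcal Z,\mathcal A)$. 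The essential input is that $G(t)$ is a concave function of $\tau(t):=\int_t^{+\infty}c(s)e^{-s}\,ds$.

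First I would establish the inequality by computing the asymptotic ratio $G(t)/\tau(t)$ as $t\to+\infty$. Localizing the infimum to a shrinking disc $\{|w_j|<R_je^{-t/(2p_j)}\}$ around each $z_j$, the expansion $G_\Omega(\cdot,z_j)=\log|w_j|+\log c_\beta(z_j)+o(1)$ combined with the forced jet $\widetilde F = a_jw_j^{k_j}dw_j+O(w_j^{k_j+1})$ gives, after the substitution $s=-\psi$,
\[\lim_{t\to+\infty}\frac{G(t)}{\tau(t)}=\sum_{j=1}^{m}\frac{2\pi|a_j|^2e^{-2\alpha_j}}{p_jc_\beta(z_j)^{2(k_j+1)}}.\]
Since $G$ is concave in $\tau$ with $G\to 0$ as $\tau\to 0^+$, the ratio $G/\tau$ is non-increasing, hence $G(\tau(0))\le\tau(0)\cdot\lim_{\tau\to 0^+}G/\tau$, which is the stated inequality after substituting $G(0)=2/B_{\Omega,\rho}^{\mathcal K}$.

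For the sufficient direction of equality, assuming $(1)$--$(3)$, I would take as extremal the $(1,0)$-form
\[\widetilde F := c_0\,g\cdot p_*\!\left(f_u\prod_{j=1}^{m}f_{z_j}^{k_j+1}\cdot\sum_{j=1}^{m}p_j\frac{df_{z_j}}{f_{z_j}}\right).\]
Condition $(2)$, rewritten as $\chi_u\prod_j\chi_{z_j}^{k_j+1}=1$ via $\chi_{-u}=\chi_u^{-1}$, together with the deck-invariance of each logarithmic differential $df_{z_j}/f_{z_j}$, ensures that the pushforward is a well-defined single-valued holomorphic $(1,0)$-form on $\Omega$; condition $(3)$ matches its jet to $f$ at every $z_j$. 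Using $v=\log|g|+u$ and $\sum_j p_j df_{z_j}/f_{z_j}=\partial\psi$, a direct computation collapses $|\widetilde F|^2\rho$ to $|c_0|^2 e^{\psi}c(-\psi)\,\sqrt{-1}\partial\psi\wedge\bar\partial\psi$, and integration by parts against $dd^c\psi$, concentrated at the poles of $\psi$, reduces $\int_\Omega|\widetilde F|^2\rho$ to a residue sum that, combined with the constant $|c_0|^2$ extracted from $(3)$, exactly realises equality.

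The main obstacle is the converse direction. Given equality, concavity of $G$ in $\tau$ must collapse to a linear function on $[0,\tau(0)]$, and the linearity characterization in \cite{G-M-Y} then forces the unique minimiser $F$ of $G(0)$ into an algebraic form matching the candidate above; pulled back to the universal cover this reads $p^*F=c_0\,p^*g\cdot f_u\prod_j f_{z_j}^{k_j+1}\cdot\sum_j p_j df_{z_j}/f_{z_j}$ for some single-valued holomorphic function $g$ on $\Omega$ nonvanishing at each $z_j$. Reading $e^{2v}$ off this identity yields the decomposition $(1)$; the character balance needed for the right-hand side to descend to $\Omega$ is exactly $(2)$; and matching the leading coefficients of $F$ with those of $f$ at each $z_k$ produces the common constant in $(3)$. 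The delicate step is the passage from the local rigidity supplied by the concavity theorem (which forces a canonical radial profile of the minimiser near each $z_j$) to a \emph{global} holomorphic decomposition of $v$ across all of $\Omega$, which is where the full strength of the Guan--Mi--Yuan characterization is used.
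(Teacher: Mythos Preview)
The paper does not contain a proof of Theorem~\ref{thm6}: it is quoted verbatim as a result of Guan--Yuan \cite{G-Y2} and used as a black box throughout (the present paper's contribution starts at Theorem~\ref{thm1}, which \emph{applies} Theorem~\ref{thm6}). So there is no ``paper's own proof'' to compare your proposal against.

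That said, your outline follows the architecture one expects from \cite{G-Y2,G-M-Y}: set $\psi=\sum_j 2p_jG_\Omega(\cdot,z_j)$, invoke the concavity of the minimal $L^2$ integral $G(t)$ in the variable $\int_t^\infty c(s)e^{-s}ds$, identify the limiting slope via a local computation near each $z_j$, and then use the linearity characterization to pin down the structure of the extremal. One point to tighten: you cite \cite{G-M-Y} for the concavity and its equality characterization, but the version you need---on an open Riemann surface, with the specific weight $\varphi+\psi$ involving several Green-function poles and a general subharmonic $v$, and with the conclusion that linearity forces $v=\log|g|+u$ globally together with the character identity $(2)$---is precisely what is developed in \cite{G-Y2}, not in \cite{G-M-Y}. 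Your last paragraph correctly flags this as the delicate step, but as written the argument is circular: you are effectively assuming the content of \cite{G-Y2} to reprove Theorem~\ref{thm6}, which \emph{is} the relevant theorem of \cite{G-Y2}. If your intent is to give an independent proof, you would need to supply that global rigidity step yourself rather than defer to the reference whose conclusion you are trying to establish.
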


Note that the holomorphic $(1,0)$ form $p_*(f_u(\prod_{1\le j\le m}f_{z_j}^{k_j+1})(\sum_{1\le j\le m}p_j\frac{df_{z_j}}{f_{z_j}}))$ in statement $(3)$ is a single valued holomorphic $(1,0)$ form on $\Omega$ when statement $(2)$ holds.

\subsection{Main result}
In this article, we present some results about the equality part in the weighted version of Suita conjecture for higher derivatives and finite points case.

We continue to use the notations in Theorem \ref{thm6}. Throughout this paper, we consistently assume $v=\log|g|+u$, where $g\not\equiv0$ is a holomorphic function on $\Omega$ and $u$ is a harmonic function on $\Omega$. 

Let $\Pi_1$ be a subset of the fundamental group $\pi_1(\Omega)$ which generates $\pi_1(\Omega)$ (when $\Omega$ is simply connected, set $\Pi_1=\{0\}$). In this article, $\Pi_1$ generates $\pi_1(\Omega)$ means that there is no proper subgroup of $\pi_1(\Omega)$ containing $\Pi_1$.
Denote by $\widetilde{d}:=\frac{\partial-\bar{\partial}}{i}$. For every $\alpha\in \Pi_1$, $\gamma_{\alpha}$ denotes a piecewise smooth closed curve on $\Omega$, which represents $\alpha$.
Let $p:\Delta\rightarrow\Omega$ be the universal covering of $\Omega$.
By Lemma \ref{lemma7}, there exists a harmonic function $\widetilde u_{\alpha}$ on $\Delta$ such that
\begin{equation}\label{eq:1118b}
	\frac{1}{2\pi}\left(\widetilde u_{\alpha}(z')-\int_{\gamma_{\alpha}}\widetilde{d}G_{\Omega}(\cdot,p(z'))\right)\in\mathbb{Z}
\end{equation}
for any $z'\in \Delta\backslash{p^{-1}(\gamma_{\alpha})}$. 
Condition \eqref{eq:1118b} implies that $\widetilde{u}_{\alpha}$ is unique up to addition of a multiple of $2\pi$, and we will show that $\widetilde{u}_{\alpha}$ depends only on the homology class of $\gamma_{\alpha}$ in $\Omega$ (up to addition of a multiple of $2\pi$), which in particular
shows that $\widetilde{u}_{\alpha}$ depends only on $\alpha\in \pi_1(\Omega)$. It can be inferred from Condition \eqref{eq:1118b} that $\frac{1}{2\pi}(\beta^*\widetilde{u}_{\alpha}-\widetilde{u}_{\alpha})$ is an integer constant function for any $\beta\in\pi_1(\Omega)$.

Based on Theorem \ref{thm6}, we use $\{\widetilde{u}_{\alpha}\}_{\alpha\in\Pi_1}$ to give a characterization of the holding of the equality in the weighted version of Suita conjecture for higher derivatives and finite points case. 

\begin{Theorem}
\label{thm1}
Given $m$ distinct points $z_1,\ldots,z_m$ satisfying that $g(z_i)\neq 0$ for each $i$, and $m$ nonnegative integers $k_1,\ldots,k_m$, 
equality
\begin{equation}
\label{eq2}
1= \left(\int_0^{+\infty}c(s)e^{-s}ds\right)\left(\sum_{j=1}^m\frac{\pi|a_j|^2e^{-2\alpha_j}}{p_jc_{\beta}(z_j)^{2(k_j+1)}}\right)B_{\Omega,\rho}^{\mathcal{K}}(\mathcal{Z},\mathcal{A})
\end{equation}
holds for some numbers $a_1,\ldots,a_m$ if and only if
\begin{equation}
\label{eq1}\sum_{j=1}^m\left(\frac{k_j+1}{2\pi}\right)\widetilde{u}_{\alpha}(\widetilde{z_j})+\frac{1}{2\pi}\int_{\gamma_{\alpha}}\widetilde{d}u\in \mathbb{Z} 
\end{equation}
for any $\alpha\in\Pi_1$ and $\widetilde{z_j}\in p^{-1}(z_j)$ for $1\le j\le m$. 
\end{Theorem}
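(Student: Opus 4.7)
The plan is to deduce Theorem \ref{thm1} from Theorem \ref{thm6} by showing that, under the standing assumption $v=\log|g|+u$, the existence of some $(a_1,\ldots,a_m)$ realizing equality in \eqref{eq2} is equivalent to condition $(2)$ of Theorem \ref{thm6}, and that this condition is in turn equivalent to \eqref{eq1}.

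The first step I would take is to rewrite each character in terms of periods of conjugate differentials. For any harmonic $h$ on $\Omega$, the multiplicative function $f_h$ satisfies $\log f_h=p^{*}h+i\widetilde{v}$ on $\Delta$, where $\widetilde{v}$ is a harmonic conjugate of $p^{*}h$. Since $d\widetilde{v}=p^{*}(\widetilde{d}h)$, a deck transformation $\alpha$ shifts $\widetilde{v}$ by the period $\int_{\gamma_\alpha}\widetilde{d}h$, giving
\[\chi_h(\alpha)=\exp\Bigl(i\int_{\gamma_\alpha}\widetilde{d}h\Bigr).\]
Applying this with $h=-u$ and $h=G_{\Omega}(\cdot,z_j)$ converts condition $(2)$ of Theorem \ref{thm6} into
\[\frac{1}{2\pi}\int_{\gamma_\alpha}\widetilde{d}u+\sum_{j=1}^{m}\frac{k_j+1}{2\pi}\int_{\gamma_\alpha}\widetilde{d}G_{\Omega}(\cdot,z_j)\in\mathbb{Z}\]
for every $\alpha\in\pi_1(\Omega)$. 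By the defining property \eqref{eq:1118b} of $\widetilde{u}_\alpha$, after picking representatives $\gamma_\alpha$ avoiding the $z_j$'s, each period $\int_{\gamma_\alpha}\widetilde{d}G_{\Omega}(\cdot,z_j)$ equals $\widetilde{u}_\alpha(\widetilde{z_j})$ modulo $2\pi\mathbb{Z}$, and substitution produces exactly \eqref{eq1}. Both sides of $(2)$ are group homomorphisms on $\pi_1(\Omega)$, so since $\Pi_1$ generates the fundamental group it suffices to check \eqref{eq1} for $\alpha\in\Pi_1$.

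Next I would show that condition $(2)$ alone makes condition $(3)$ of Theorem \ref{thm6} attainable by a suitable choice of $(a_1,\ldots,a_m)$. When $(2)$ holds, the form
\[F:=p_{*}\Bigl(f_u\prod_{j=1}^{m}f_{z_j}^{k_j+1}\sum_{l=1}^{m}p_l\frac{df_{z_l}}{f_{z_l}}\Bigr)\]
is a well-defined single-valued holomorphic $(1,0)$ form on $\Omega$, as already noted in the text. Near each $z_k$, the factor $f_{z_k}$ is a local uniformizer at a preimage in $\Delta$ while the remaining $f_{z_l}$, $l\neq k$, are holomorphic and nonvanishing there; an order-of-vanishing count shows that $F$ has a zero of order exactly $k_k$ at $z_k$ with a nonzero leading coefficient $b_k$ in the $w_k$-coordinate. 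Since $g(z_k)\neq 0$, the choice $a_k:=g(z_k)b_k$ forces $\lim_{z\to z_k}f/(gF)=1$ simultaneously for every $k$, verifying $(3)$ with $c_0=1$, and each $a_k$ is automatically nonzero. Theorem \ref{thm6} then finishes the argument.

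The main obstacle I anticipate is the period identification in the second paragraph: one must verify carefully that $\alpha^{*}\widetilde{v}-\widetilde{v}$ equals precisely $\int_{\gamma_\alpha}\widetilde{d}h$, that this number depends only on the homology class of $\gamma_\alpha$ (so $\chi_h$ is well defined on $\pi_1(\Omega)$), and that no sign or normalization discrepancy arises relative to the defining relation \eqref{eq:1118b} of $\widetilde{u}_\alpha$. Consistency of these conventions is precisely what makes the clean equivalence between $(2)$ and \eqref{eq1} possible, whereas the local analysis at each $z_k$ is routine and merely fixes the constants $a_k$.
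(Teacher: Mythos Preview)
Your proposal is correct and follows essentially the same approach as the paper: both reduce the equivalence to showing that condition (2) of Theorem \ref{thm6} is equivalent to \eqref{eq1} via the period formula $\chi_h(\alpha)=e^{i\int_{\gamma_\alpha}\widetilde{d}h}$ (the paper's Lemma \ref{lemma1}, together with Lemma \ref{lemma2} for the reduction to the generating set $\Pi_1$) and the defining relation \eqref{eq:1118b}. Your explicit verification that condition (3) is attainable once (2) holds---by computing the order of vanishing of $F$ at each $z_k$ and reading off the $a_k$---is a detail the paper leaves implicit in its final ``following from Theorem \ref{thm6}'' step.
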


Following from Theorem \ref{thm6} and Theorem \ref{thm1}, we know that
if the condition \eqref{eq1} is satisfied, the equality \eqref{eq2} holds for  numbers $a_1,\ldots,a_m$ if and only if there exists a constant $c_0\in\mathbb{C}\backslash\{0\}$ such that
\[\lim_{z\rightarrow z_k}\frac{a_jw_j^{k_j}dw_j}{gp_*(f_u(\prod_{1\le j\le m}f_{z_j}^{k_j+1})(\sum_{1\le j\le m}p_j\frac{df_{z_j}}{f_{z_j}}))}=c_0\]
holds for any $k\in\{1,2,\ldots,m\}$.

When $m=1$, the above theorem can be referred to \cite{G-S-Y}.
\begin{Remark}
\label{remark3}
Suppose that $\Omega$ is conformally equivalent to the unit disc less a closed set of inner capacity zero. Then all $\frac{1}{2\pi}\widetilde{u}_{\alpha}$ are integer-valued functions on the unit disc $\Delta$. As a result, when $v=\log|g|$, condition \eqref{eq1} is always satisfied. 
On the other hand, if all $\frac{1}{2\pi}\widetilde{u}_{\alpha}$ are integer-valued functions on the unit disc $\Delta$, then $\Omega$ is conformally equivalent to the unit disc less a closed set of inner capacity zero. We will prove the present remark in Section \ref{section1}.
\end{Remark}

Note that for any $m$ points $z_1,\ldots,z_m$ and any $m$ nonnegative integers $k_1,\ldots,k_m$, there exists a harmonic function $u$ on $\Omega$ such that $\prod_{1\le j\le m}\chi_{z_j}^{k_j+1}=\chi_{-u}$ (see \cite{G-Y2}). Following from Theorem  \ref{thm6}, there exist $m$ numbers $a_1,\ldots,a_m$ such that equality \eqref{eq2} holds. Thus, it is a natural question that

\begin{Question}\label{q:1}
Can one obtain some sufficient or necessary conditions for equality \eqref{eq2} to hold when $u=0$?
\end{Question}

Recall that when $u=0$, $v=\log |g|$, where $g\not\equiv 0$ is a holomorphic function on $\Omega$. Based on Theorem \ref{thm6}, the above question focuses on the statement $\prod_{1\le j\le m}\chi_{z_j}^{k_j+1}=1$. 

From Remark \ref{remark3}, we can see that if $\Omega$ is not conformally equivalent to the unit disc less a (possible) closed set of inner capacity zero, there exists an element $\alpha\in\Pi_1$ such that $\frac{1}{2\pi}\widetilde{u}_{\alpha}$ is not an integer-valued function on the unit disc $\Delta$. 
As a result, condition \eqref{eq1} is not always satisfied. 
The following corollary gives a sufficient condition for equality \eqref{eq2} to hold when $u=0$ and $\{\frac{1}{2\pi}\widetilde{u}_{\alpha}\}$ are not all constant integer-valued functions.
\begin{Corollary}
\label{coro2}
Suppose that $u=0$ and $\frac{1}{2\pi}\widetilde{u}_{\alpha_1},\ldots,\frac{1}{2\pi}\widetilde{u}_{\alpha_m}$ are not constant functions on the unit disc, while $\frac{1}{2\pi}\widetilde{u}_{\alpha}$ are constant integer-valued functions on the unit disc $\Delta$ for other $\alpha\in\Pi_1$. 
Fix $m$ distinct points $\{z'_1,\ldots,z'_m\}\subset\Delta\backslash p^{-1}(g^{-1}(0))$. If the matrix 
\[\left(\frac{\partial\widetilde{u}_{\alpha_k}}{\partial z}(z_j')\right)_{1\le k\le m,1\le j\le m}\] 
is nondegenerate, then there exist $m$ different points $\{{z}_1,\ldots,{z}_m\}\subset\Omega\backslash g^{-1}(0)$, $m$ nonnegative integers $k_1,\ldots,k_m$ and $m$ numbers $a_1,\ldots,a_m$, such that equality  
\[1= \left(\int_0^{+\infty}c(s)e^{-s}ds\right)\left(\sum_{j=1}^m\frac{\pi|a_j|^2e^{-2\alpha_j}}{p_jc_{\beta}(z_j)^{2(k_j+1)}}\right)B_{\Omega,\rho}^{\mathcal{K}}(\mathcal{Z},\mathcal{A})\]
holds.
\end{Corollary}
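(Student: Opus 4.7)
The plan is to invoke Theorem \ref{thm1} and reduce the corollary to finding lifts $\widetilde{z}_j$ near $z'_j$ and integers $k_j$ satisfying condition \eqref{eq1}. With $u=0$ this condition reads
$$\sum_{j=1}^m (k_j+1)\,\widetilde{u}_\alpha(\widetilde{z}_j)\in 2\pi\mathbb{Z}\qquad\text{for every }\alpha\in\Pi_1.$$
For $\alpha\notin\{\alpha_1,\dots,\alpha_m\}$ the function $\widetilde{u}_\alpha/(2\pi)$ is a constant integer by hypothesis, so the condition is automatic. Only the $m$ equations corresponding to $\alpha=\alpha_1,\dots,\alpha_m$ remain, and the transformation law $\widetilde{u}_\alpha\circ\beta-\widetilde{u}_\alpha\in 2\pi\mathbb{Z}$ under deck transformations ensures that the condition is independent of the choice of lifts $\widetilde{z}_j$.

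The heart of the argument is a submersion-plus-scaling trick. Consider the smooth map
$$\Phi\colon\Delta^m\longrightarrow\mathbb{R}^m,\qquad \Phi_k(w_1,\dots,w_m):=\sum_{j=1}^m\widetilde{u}_{\alpha_k}(w_j).$$
Because each $\widetilde{u}_{\alpha_k}$ is a real-valued harmonic function, the $\mathbb{R}$-linear differential of $\Phi$ at $(z'_1,\dots,z'_m)$ has the form $\delta\mapsto 2\,\mathrm{Re}(V\delta)$, where $V=\bigl(\partial_z\widetilde{u}_{\alpha_k}(z'_j)\bigr)_{k,j}$. The hypothesized nondegeneracy of $V$ implies surjectivity of this linear map (solve $V\delta=y/2$ for any $y\in\mathbb{R}^m$), so $\Phi$ is a submersion at $(z'_1,\dots,z'_m)$. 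Consequently there is a neighborhood $U$ of $(z'_1,\dots,z'_m)$ in $\Delta^m$ with $\Phi(U)\supset B(\Phi(z'_1,\dots,z'_m),r)$ for some $r>0$. Choosing $k_j+1=N$ for all $j$ with $N$ a large positive integer, condition \eqref{eq1} becomes $\Phi(\widetilde{z})\in\tfrac{2\pi}{N}\mathbb{Z}^m$. For $N$ sufficiently large the lattice $\tfrac{2\pi}{N}\mathbb{Z}^m$ meets the ball $B(\Phi(z'_1,\dots,z'_m),r)$, producing the required lifts $\widetilde{z}_j\in U$.

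To conclude, the points $z_j:=p(\widetilde{z}_j)$ must be pairwise distinct and lie in $\Omega\setminus g^{-1}(0)$. The projections $p(z'_j)$ themselves are distinct: if $z'_k=\beta(z'_j)$ for some deck transformation $\beta$ and some $j\ne k$, then the transformation law above together with the holomorphicity of $\beta$ forces $\partial_z\widetilde{u}_{\alpha_l}(z'_k)=\partial_z\widetilde{u}_{\alpha_l}(z'_j)/\beta'(z'_j)$ for every $l$, making the $j$-th and $k$-th columns of $V$ proportional and contradicting its nondegeneracy. Hence $U$ may be taken as a product of small open neighborhoods of the $z'_j$ whose $p$-images are pairwise disjoint and disjoint from the closed discrete set $g^{-1}(0)$. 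Theorem \ref{thm1} applied to the resulting data then furnishes numbers $a_j$ realizing equality \eqref{eq2}. The main technical point is the scaling argument of the second paragraph, whose validity relies on the freedom to take $k_j+1$ arbitrarily large so that the lattice $\tfrac{2\pi}{N}\mathbb{Z}^m$ becomes fine enough to intersect the open image of $\Phi$.
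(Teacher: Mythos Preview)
Your proposal is correct and follows essentially the same strategy as the paper: reduce to Theorem~\ref{thm1}, use the nondegeneracy hypothesis to show that the map $(\widetilde z_1,\dots,\widetilde z_m)\mapsto\bigl(\sum_j\widetilde u_{\alpha_k}(\widetilde z_j)\bigr)_k$ has open image near $(z'_1,\dots,z'_m)$, pick a point in this image with rational coordinates (equivalently, a $\tfrac{2\pi}{N}\mathbb{Z}^m$-lattice point for large $N$), and then take all $k_j+1=N$ to clear denominators. The only cosmetic difference is that the paper complexifies by writing each $\widetilde u_{\alpha_k}$ locally as $\mathrm{Re}\,f_k$ and applies the holomorphic inverse function theorem on $\mathbb{C}^m$, whereas you work directly with the real submersion $\Phi\colon\Delta^m\to\mathbb{R}^m$; your explicit argument that the projections $p(z'_j)$ are pairwise distinct (via column proportionality of $V$) is a detail the paper's proof leaves implicit.
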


\

In the following part, we assume that $\Omega\subset\mathbb{C}$ is a bounded  planar domain which is bounded by $n$ analytic Jordan curves and we do not require that $u=0$.
Denote by $\mathbb{C}\backslash \Omega:=\bigcup_{i=1}^{n}A_k$, where $A_k$ are disjoint closed sets, and $A_n$ is the unbounded  connected component of $\mathbb{C}\backslash\Omega$. 
Denote the boundary of $A_k$ by $\Gamma_k$, $1\le k\le n$.
For any $1\le k\le n-1$, let $\gamma_k$ be a closed smooth curve in $\Omega$, which winds around points in $A_k$ once and does not wind around points in $A_k$ for $l\neq k$ (for details, see \cite{fulton,G-S-Y}).

For $1\le k\le n-1$, by solving a Dirichlet problem, there exists $u_k\in C(\overline{\Omega})$, which satisfies that $u_k=1$ on $\Gamma_k$, $u_k=0$ on $\Gamma_l$ for $l\neq k$ and $u_k$ is harmonic on $\Omega$.

Note that the homotopy classes of $\gamma_1,\ldots,\gamma_{n-1}$ generate $\pi_1(\Omega)$. We can therefore choose $\Pi_1$ in Theorem \ref{thm1} to be $\{[\gamma_1],\ldots,[\gamma_{n-1}]\}$, where $[\gamma_i]$ represents the homotopy class of $\gamma_i$ for $1\le i\le n-1$. Then, using Theorem \ref{thm1}, we obtain the following result.

\begin{Theorem}
\label{thm2}
Given $m$ distinct points $z_1,\ldots,z_m$ satisfying that $g(z_i)\neq 0$ for each $i$, and $m$ nonnegative integers $k_1,\ldots,k_m$, 
equality 
\begin{equation}
\label{eq5}
1=\left(\int_0^{+\infty}c(s)e^{-s}ds\right)\left(\sum_{j=1}^m\frac{\pi|a_j|^2e^{-2\alpha_j}}{p_jc_{\beta}(z_j)^{2(k_j+1)}}\right)B_{\Omega,\rho}^{\mathcal{K}}(\mathcal{Z},\mathcal{A})
\end{equation}
holds for some numbers $a_1,\ldots,a_m$ if and only if
\begin{equation}
\label{eq4}
\sum_{1\le j\le m}(k_j+1)u_k(z_j)+\frac{1}{2\pi}\int_{\gamma_k}\frac{\partial u(z)}{\partial n}ds(z)\in \mathbb{Z}
\end{equation} 
holds for any $1\le k\le n-1$, where $\frac{\partial}{\partial n}$ means the differentiation in the direction of the outward pointing normal, and $s$ is the arc-length parameter.  
\end{Theorem}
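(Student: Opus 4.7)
The strategy is to specialize Theorem \ref{thm1} to the generating set $\Pi_1 = \{[\gamma_1],\dots,[\gamma_{n-1}]\}$ of $\pi_1(\Omega)$, which is available because $\Omega$ is $n$-connected. With this choice, Theorem \ref{thm1} asserts that equality \eqref{eq5} holds for some $a_1,\dots,a_m$ if and only if, for every $k\in\{1,\dots,n-1\}$,
\[\sum_{j=1}^m\frac{k_j+1}{2\pi}\,\widetilde u_{[\gamma_k]}(\widetilde z_j)\;+\;\frac{1}{2\pi}\int_{\gamma_k}\widetilde d u\;\in\;\mathbb Z.\]
Consequently the whole proof reduces to two identifications: (i) expressing $\widetilde u_{[\gamma_k]}(\widetilde z_j)$ in terms of the harmonic measure $u_k(z_j)$ modulo $2\pi\mathbb Z$, and (ii) rewriting $\int_{\gamma_k}\widetilde d u$ as the normal-derivative integral $\int_{\gamma_k}\partial_n u\,ds$ that appears in \eqref{eq4}.

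The key ingredient is the period identity
\[\int_{\gamma_k}\widetilde d\,G_\Omega(\cdot,z) \;=\; 2\pi\,u_k(z)\qquad(z\in\Omega).\]
Granted this identity, the defining condition \eqref{eq:1118b} for $\widetilde u_{[\gamma_k]}$ gives at once $\frac{1}{2\pi}\widetilde u_{[\gamma_k]}(\widetilde z_j)\equiv u_k(z_j)\pmod{\mathbb Z}$. To prove the identity I will apply Green's reciprocity to the pair $(u_k,\,G_\Omega(\cdot,z))$ on $\Omega\setminus\overline{D_\varepsilon(z)}$, invoking the boundary data $u_k|_{\Gamma_l}=\delta_{kl}$ and $G_\Omega(\cdot,z)|_{\partial\Omega}=0$. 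The logarithmic singularity of $G_\Omega(\cdot,z)$ produces the factor $2\pi u_k(z)$ in the $\varepsilon\to 0$ limit from the $\partial D_\varepsilon(z)$ contribution, while the outer-boundary contribution collapses to $\int_{\Gamma_k}\partial_n G_\Omega(\cdot,z)\,ds$. Since the one-form $\widetilde d\,G_\Omega(\cdot,z)$ is closed on $\Omega\setminus\{z\}$ and $\gamma_k$ is homologous to $\Gamma_k$ there (by the defining property of $\gamma_k$: a single winding around $A_k$ and none around any other $A_l$), its integral over $\gamma_k$ equals this flux across $\Gamma_k$.

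For identification (ii), a direct check in local coordinates shows $\widetilde d h = \partial_n h\,ds$ along any oriented smooth curve with $n$ the compatible outward unit normal, so $\int_{\gamma_k}\widetilde d u = \int_{\gamma_k}\partial_n u\,ds$. Substituting (i) and (ii) into the $\Pi_1$-version of condition \eqref{eq1} converts it exactly into \eqref{eq4}, which completes the argument. The only genuine subtlety is orientational bookkeeping: the direction of traversal of $\gamma_k$ (winding once around $A_k$), the induced choice of outward normal, and the sign of $\partial_n$ in Green's reciprocity must all be harmonized so that the period identity comes out with the sign $+2\pi u_k(z)$ rather than its negative; once a consistent convention is fixed the remaining computations are routine.
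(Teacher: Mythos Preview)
Your approach is essentially the paper's: specialize Theorem~\ref{thm1} to $\Pi_1=\{[\gamma_1],\dots,[\gamma_{n-1}]\}$, identify $\frac{1}{2\pi}\widetilde u_{[\gamma_k]}$ with the harmonic measure $u_k$ via the Green representation $u_k(z)=\frac{1}{2\pi}\int_{\Gamma_k}\partial_n G_\Omega(\cdot,z)\,ds$, and rewrite $\widetilde d u$ as $\partial_n u\,ds$. Two small technical points deserve tightening. First, your ``key period identity'' $\int_{\gamma_k}\widetilde d\,G_\Omega(\cdot,z)=2\pi u_k(z)$ cannot hold as an exact equality for all $z\in\Omega$: the left side jumps by $2\pi$ as $z$ crosses $\gamma_k$ while $u_k$ is continuous, so one only gets $u_k(z)-\frac{1}{2\pi}\int_{\gamma_k}\widetilde d\,G_\Omega(\cdot,z)\in\mathbb Z$ (which is exactly what the paper proves, and is all that condition~\eqref{eq4} requires). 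Second, the assertion that $\gamma_k$ is homologous to $\Gamma_k$ ``in $\Omega\setminus\{z\}$'' is not literally meaningful since $\Gamma_k\subset\partial\Omega$; the paper handles this by using the analytic-boundary hypothesis to extend $G_\Omega(\cdot,z)$ harmonically across $\partial\Omega$ via Riemann--Schwarz reflection to a neighborhood $U\supset\overline\Omega$, then compares $\gamma_k$ and $\Gamma_k$ as cycles in $U\setminus\{z\}$ (where they differ by an integer multiple of a small loop around $z$, again yielding only congruence mod $2\pi\mathbb Z$). Once these two points are adjusted, your argument coincides with the paper's.
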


\begin{Remark}
\label{remark1}
Recall that for each $[\gamma_k]\in \Pi_1$, we have constructed a harmonic function $\widetilde{u}_k$ on the unit disc $\Delta$ satisfying the condition \eqref{eq:1118b}.

Since $\Omega$ is a planar domain, $p_*\widetilde{u}_{k}$ is a single-valued function on $\Omega$ (see \cite{G-S-Y}) for each $1\le k\le n-1$ and is harmonic on $\Omega$, and $p_*\widetilde{u}_{k}$ equals to $2\pi u_k$ plus a multiple
of $2\pi$ for each $1\le k\le n-1$.
We prove the present remark in Section \ref{section2}.
\end{Remark}

The following remark gives an example of Theorem \ref{thm2}.
\begin{Remark}
\label{remark2}
 Suppose that $\Omega=\{z\in\mathbb{C}:1<|z|<R\}$. Denote that
\[c:=\int_{ \{|z|=\frac{R+1}{2}\}}\frac{\partial u}{\partial n}ds(z)\]
is a constant. Note that $u_1=\frac{\log R-\log|z|}{\log R}$ is a harmonic function on $\Omega$ such that $u_1(z)=1$ when $|z|=1$ and $u_1(z)=0$ when $|z|=R$. 
Using Theorem \ref{thm2}, we can see that given $m$ nonnegative integers $k_1,\ldots,k_m$ and $m$ distinct points $z_1,\ldots,z_m$,
the equality
\[1= \left(\int_0^{+\infty}c(s)e^{-s}ds\right)\left(\sum_{j=1}^m\frac{\pi|a_j|^2e^{-2\alpha_j}}{p_jc_{\beta}(z_j)^{2(k_j+1)}}\right)B_{\Omega,\rho}^{\mathcal{K}}(\mathcal{Z},\mathcal{A})\]
holds for some numbers $a_1,\ldots,a_m$ if and only if
\[|z_1|^{k_1+1}\cdots|z_m|^{k_m+1}=R^{k_1+\cdots+k_m+m+c-N},\] 
where $N$ is an integer.
\end{Remark}

In the following, we always assume that $u=0$ and give some answers to Question \ref{q:1}.

The following corollary shows that equality \eqref{eq:1120} can not hold for any $m$ different points $z_1,\ldots,z_m$ and any $m$ numbers $a_1,\ldots,a_m$ when the connectivity $n$ of $\Omega$  is sufficiently large and $u=0$.

\begin{Corollary}\label{c:n}
Let $k_1,\ldots,k_m$ be arbitrary $m$ nonnegative integers. Then
equality 
\begin{equation}
\label{eq:1120}
	1=\left(\int_0^{+\infty}c(s)e^{-s}ds\right)\left(\sum_{j=1}^m\frac{\pi|a_j|^2e^{-2\alpha_j}}{p_jc_{\beta}(z_j)^{2(k_j+1)}}\right)B_{\Omega,\rho}^{\mathcal{K}}(\mathcal{Z},\mathcal{A})
\end{equation}
does not hold for any $m$ distinct points $z_1,\ldots,z_m$ and any $m$ numbers $a_1,\ldots,a_m$ unless 
$$n\le\sum_{1\le j\le m}(k_j+1).$$
\end{Corollary}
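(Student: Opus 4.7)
The plan is to specialize Theorem \ref{thm2} to the case $u=0$ and then extract a quantitative constraint from the necessary condition, using elementary properties of harmonic measures of the boundary components of $\Omega$.

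First I would invoke Theorem \ref{thm2}: if equality \eqref{eq:1120} holds for some choice of points $z_1,\ldots,z_m\in\Omega$ (with $g(z_j)\neq 0$) and coefficients $a_1,\ldots,a_m$, then for $u=0$ the boundary integral term $\frac{1}{2\pi}\int_{\gamma_k}\tfrac{\partial u}{\partial n}ds$ vanishes, so the condition reduces to
\[
N_k:=\sum_{j=1}^m(k_j+1)u_k(z_j)\in\mathbb{Z}\qquad(k=1,\ldots,n-1),
\]
where $u_k\in C(\overline{\Omega})$ is the harmonic measure of the boundary component $\Gamma_k$ constructed in the paragraph preceding Theorem \ref{thm2}.

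Next I would analyze $N_k$. By the maximum principle, each $u_k$ is strictly between $0$ and $1$ on $\Omega$ (its boundary values are $0$ or $1$ and it is non-constant), so $0<u_k(z_j)<1$ for every $j$ and $k$. Writing $S:=\sum_{j=1}^m(k_j+1)$, this gives $0<N_k<S$ for every $k$, and since $N_k$ is an integer, $N_k\geq 1$. Summing over $k=1,\ldots,n-1$ yields
\[
n-1\leq\sum_{k=1}^{n-1}N_k=\sum_{j=1}^m(k_j+1)\sum_{k=1}^{n-1}u_k(z_j).
\]
Now I would observe that the function $U:=\sum_{k=1}^{n-1}u_k$ is harmonic on $\Omega$ with boundary value $1$ on $\Gamma_1\cup\cdots\cup\Gamma_{n-1}$ and $0$ on $\Gamma_n$, so by the maximum principle $0<U(z)<1$ for all $z\in\Omega$. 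Hence $\sum_{k=1}^{n-1}u_k(z_j)<1$ strictly for each $j$, and the displayed inequality gives $n-1<\sum_{j=1}^m(k_j+1)=S$, i.e.\ $n\leq S=\sum_{j=1}^m(k_j+1)$, which is the contrapositive of the desired statement.

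There is no real obstacle: the only non-routine point is recognizing that the single inequality $U<1$ (rather than termwise bounds $u_k<1$) is what actually delivers the strict estimate $\sum_kN_k<S$, since termwise bounds alone would only give $N_k<S$ and the summed inequality $\sum N_k<(n-1)S$, which is too weak. Once one uses $U<1$, both the integrality of each $N_k$ and the global bound combine to force $n-1<S$.
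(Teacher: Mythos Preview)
Your proposal is correct and follows essentially the same argument as the paper: both specialize Theorem~\ref{thm2} to $u=0$, use $0<u_k<1$ together with integrality to force $N_k\ge 1$, and then exploit $0<\sum_{k=1}^{n-1}u_k<1$ to obtain the strict inequality $n-1<\sum_j(k_j+1)$. You have also correctly identified the subtle point that it is the bound on the sum $U=\sum_k u_k$, not the individual bounds $u_k<1$, that yields the needed strictness.
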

When $m=1$, the above Corollary can be seen in \cite{G-S-Y}, and when $m=1$ and $k_1=0$,  the above Corollary is  the solution of the equality part of Suita conjecture (see \cite{guan-zhou13ap}) for the case that $\Omega$ is a   planar domain  bounded by  finite analytic Jordan curves.

Take arbitrary $m$ distinct points $z_1,\ldots,z_m$ and arbitrary $m$ numbers $a_1,\ldots,a_m$. The optimal jets $L^2$ extension theorem (see \cite{G-Y2,guan-zhou13ap}) shows that  there exists a holomorphic $(1,0)$ form $F$ on $\Omega$, such that $F=a_j(z-z_j)^{k_j}dz+o((z-z_j)^{k_j})dz$ near $z_j$ for any $j$ and 
\begin{equation}
	\label{eq:241119}
	\int_{\Omega}|F|^2\rho\le \left(\int_0^{+\infty}c(s)e^{-s}ds\right)\left(\sum_{j=1}^m\frac{\pi|a_j|^2e^{-2\alpha_j}}{p_jc_{\beta}(z_j)^{2(k_j+1)}}\right).
	\end{equation}
Corollary \ref{c:n} indicates that 
if $n> \sum_{1\le j\le m}(k_j+1)$, the ``$\le$" in inequality \eqref{eq:241119} can be refined  to ``$<$".

The following theorem gives the existence of $(\mathcal{K},\mathcal{Z},\mathcal{A})$ such that equality \eqref{1} holds when $m\ge n-1$.

\begin{Theorem}
\label{thm9}
Assume that $n>1$, where  $n$ is the connectivity of $\Omega$. For any integer $m\ge n-1$, there exist $m$ distinct points $z_1,\ldots,z_{m}$, nonnegative integers $k_1,\ldots,k_{m}$ and $m$ numbers $a_1,\ldots,a_{m}$ such that  equality 
\[1= \left(\int_0^{+\infty}c(s)e^{-s}ds\right)\left(\sum_{j=1}^m\frac{\pi|a_j|^2e^{-2\alpha_j}}{p_j c_{\beta}(z_j)^{2(k_j+1)}}\right)B_{\Omega,\rho}^{\mathcal{K}}(\mathcal{Z},\mathcal{A})\]
holds.
\end{Theorem}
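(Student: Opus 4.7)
The plan is to apply Theorem \ref{thm2} to reduce the equality to an explicit integrality system in the harmonic measures $u_k$, and then to exhibit a configuration satisfying that system by a submersion-plus-lattice argument.

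\emph{Reduction.} Since $v\equiv 0$ on $\Omega$, we may take $g\equiv 1$ (so $g(z_j)\neq 0$ for any choice of $z_j$) and $u\equiv 0$ (so the boundary-flux term $\int_{\gamma_k}\partial u/\partial n\,ds$ appearing in condition \eqref{eq4} of Theorem \ref{thm2} vanishes). Theorem \ref{thm2} then asserts that equality holds for some choice of numbers $(a_1,\ldots,a_m)$ if and only if
\begin{equation}\label{eq:plan-star}
\sum_{j=1}^{m}(k_j+1)\,u_k(z_j)\in\mathbb{Z},\qquad k=1,\ldots,n-1.
\end{equation}
It therefore suffices to produce $m$ distinct points $z_1,\ldots,z_m\in\Omega$ and nonnegative integers $k_1,\ldots,k_m$ satisfying \eqref{eq:plan-star}; the required $a_j$'s are then supplied by the remark following Theorem \ref{thm1}.

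\emph{Submersion via a uniform ansatz.} Fix a positive integer $M$ (to be taken large) and set $k_j=M-1$ for every $j\in\{1,\ldots,m\}$. Then \eqref{eq:plan-star} becomes $G(\mathbf{z})\in\tfrac{1}{M}\mathbb{Z}^{n-1}$, where the real-analytic map
\[
G\colon\Omega^m\longrightarrow\mathbb{R}^{n-1},\qquad G(z_1,\ldots,z_m):=\Bigl(\sum_{j=1}^{m}u_k(z_j)\Bigr)_{k=1}^{n-1}.
\]
I claim that, under the hypothesis $m\geq n-1$, the differential $dG|_{\mathbf{z}}$ has rank $n-1$ at a generic $\mathbf{z}\in\Omega^m$. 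Indeed, if $\sum_{k=1}^{n-1}\alpha_k\,du_k(z_j)=0$ for every $j$ and some $\alpha\in\mathbb{R}^{n-1}\setminus\{0\}$, then each $z_j$ is a critical point of the harmonic function $v_\alpha:=\sum_k\alpha_k u_k$. Because $u_1,\ldots,u_{n-1}$ have distinct boundary data (reading $\alpha_l$ on $\Gamma_l$ for $l\leq n-1$ and $0$ on $\Gamma_n$), they are linearly independent modulo constants, so $v_\alpha$ is nonconstant and $\mathrm{Crit}(v_\alpha)\subset\Omega$ is discrete. Consequently, the incidence set
\[
V:=\bigl\{(\mathbf{z},[\alpha])\in\Omega^m\times\mathbb{P}^{n-2}:\nabla v_\alpha(z_j)=0\text{ for all }j\bigr\}
\]
fibers over $\mathbb{P}^{n-2}$ with discrete fibers, so $\dim V\leq n-2<2m$; its projection to $\Omega^m$ lies in a proper real-analytic subset, on whose complement $G$ is a submersion. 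In particular the image of $G$ contains a nonempty open set $U\subset\mathbb{R}^{n-1}$.

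\emph{Lattice hit and distinctness.} For all sufficiently large $M$, the lattice $\tfrac{1}{M}\mathbb{Z}^{n-1}$ meets $U$. By Sard's theorem, pick $\mathbf{p}\in\tfrac{1}{M}\mathbb{Z}^{n-1}\cap U$ that is a regular value of $G$. Then $G^{-1}(\mathbf{p})$ is a smooth submanifold of $\Omega^m$ of real dimension $2m-(n-1)\geq n-1\geq 1$, while the diagonal $\{\mathbf{z}\in\Omega^m:z_i=z_j\text{ for some }i\neq j\}$ has real codimension $2$ in $\Omega^m$; thus $G^{-1}(\mathbf{p})$ contains a point $(z_1,\ldots,z_m)$ with pairwise distinct coordinates. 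Together with $k_j=M-1$, this configuration satisfies \eqref{eq:plan-star}, and the reduction step yields the numbers $a_j$.

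\emph{Main obstacle.} The delicate point is the generic-rank claim, and this is precisely where the hypothesis $m\geq n-1$ is used: the pencil $\{v_\alpha\}_{[\alpha]\in\mathbb{P}^{n-2}}$ has dimension $n-2$, and $m\geq n-1$ gives $\Omega^m$ enough room (real dimension $2m\geq 2(n-1)>n-2$) to escape the union of critical configurations of that pencil. Once this transversality is in place, Sard's theorem and the openness of the distinctness condition finish the argument routinely.
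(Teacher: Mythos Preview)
Your overall strategy coincides with the paper's: reduce via Theorem~\ref{thm2} to the integrality condition $\sum_j(k_j+1)u_k(z_j)\in\mathbb Z$, take all $k_j$ equal, and show that the sum map $(z_1,\dots,z_m)\mapsto\bigl(\sum_j u_k(z_j)\bigr)_k$ is a submersion somewhere so that one can hit a rational point and then clear denominators. The paper carries this out holomorphically: it passes to the holomorphic completions $f_k$ of $u_k$ on disjoint simply connected neighbourhoods $V_1,\dots,V_m$, uses the elementary inductive Lemma~\ref{lemma9} to produce a point where the Jacobian $\bigl(\partial f_k/\partial z(z_j')\bigr)$ has rank $n-1$, and then invokes the holomorphic open-mapping theorem. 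Distinctness comes for free because the construction lives in $V_1\times\cdots\times V_m$ with the $V_j$ pairwise disjoint. Your incidence-variety argument for the generic rank is a legitimate (and somewhat more abstract) alternative to Lemma~\ref{lemma9}.

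Where your write-up has genuine gaps is the ``lattice hit and distinctness'' paragraph. First, Sard's theorem tells you only that the set of critical values has measure zero; since $\tfrac{1}{M}\mathbb Z^{n-1}$ is countable, you cannot conclude that some lattice point in $U$ is a regular value of $G$. Second, and more seriously, the inference ``$\dim G^{-1}(\mathbf p)\ge 1$ and the diagonal has real codimension $2$, hence $G^{-1}(\mathbf p)$ is not contained in the diagonal'' is false in general: a one-dimensional submanifold can perfectly well sit inside a codimension-two subset of the ambient space. Both problems disappear if you argue as the paper does: your own generic-rank analysis already shows that the bad set $\{\mathrm{rank}\,dG<n-1\}$ has real dimension $\le n-2<2m$, and the diagonal has dimension $2m-2$; hence a \emph{generic} $\mathbf z_0\in\Omega^m$ is simultaneously a submersion point of $G$ and has pairwise distinct coordinates. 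Restrict $G$ to a small product neighbourhood $W$ of $\mathbf z_0$ missing the diagonal; then $G(W)$ is open in $\mathbb R^{n-1}$, so for $M$ large some $\mathbf p\in\tfrac{1}{M}\mathbb Z^{n-1}$ lies in $G(W)$, and any preimage of $\mathbf p$ in $W$ already has distinct coordinates. No Sard and no codimension comparison are needed.
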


In \cite{G-S-Y}, we proved that if $\Omega\subset\mathbb{C}$  is bounded by three analytic Jordan curves and $v\equiv 0$, then there exists a point $z_1\in\Omega$ 
and a large enough integer $k_1$ such that for any $a_1\neq 0$,
\[1= \left(\int_0^{+\infty}c(s)e^{-s}ds\right)\left(\frac{\pi|a_1|^2}{p_1c_{\beta}(z_1)^{2(k_1+1)}} \right)B_{\Omega,\rho}^{k_1}(z_1;a_1).\]
We  generalize this result to arbitrary $n\ge 3$, where $n$ is the connectivity of $\Omega$.

\begin{Theorem}
\label{thm10}
Assume that $n>2$. Then there exist $n-2$ distinct points $z_1,\ldots,z_{n-2}$, nonnegative integers $k_1,\ldots,k_{n-2}$, 
and numbers $a_1,\ldots,a_{n-2}$ such that equality 
\[1= \left(\int_0^{+\infty}c(s)e^{-s}ds\right)\left(\sum_{j=1}^{n-2}\frac{\pi|a_j|^2e^{-2\alpha_j}}{p_jc_{\beta}(z_j)^{2(k_j+1)}}\right)B_{\Omega,\rho}^{\mathcal{K}}(\mathcal{Z},\mathcal{A})\]
holds, where $\mathcal{Z}$ denotes $(z_1,\ldots,z_{n-2})$, $\mathcal{K}$ denotes $(k_1,\ldots,k_{n-2})$, and $\mathcal{A}$ denotes $(a_1,\ldots,a_{n-2})$.
\end{Theorem}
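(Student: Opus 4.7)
My plan is to reduce Theorem \ref{thm10} to the integer criterion of Theorem \ref{thm2} and then exhibit the required configuration via a submersion plus open-mapping argument. Since $v\equiv 0$ we may take $g\equiv 1$ and $u\equiv 0$, so $\int_{\gamma_k}\partial u/\partial n\,ds=0$ and condition \eqref{eq4} with $m=n-2$ reduces to
\[\sum_{j=1}^{n-2}(k_j+1)\,u_k(z_j)\in\mathbb{Z}\qquad(1\le k\le n-1),\]
after which Theorem \ref{thm2} produces the coefficients $a_j$. Thus the whole task is to exhibit $n-2$ distinct points $z_j\in\Omega$ and nonnegative integers $k_j$ satisfying these $n-1$ integer conditions.

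The key input is the nondegeneracy of the harmonic-measure map $U:=(u_1,\ldots,u_{n-1}):\Omega\to\mathbb{R}^{n-1}$: its image lies in no proper affine subspace. Any affine relation $\sum c_k u_k\equiv d$ extended continuously to $\overline{\Omega}$ forces $c_l=d$ on $\Gamma_l$ for $l\le n-1$ and $d=0$ on $\Gamma_n$, hence $c=0$ and $d=0$. I would use this twice. First, $dU_z$ has rank $2$ at some $z$: otherwise every pair $(u_k,u_l)$ is functionally dependent, and harmonicity forces $u_l=au_k+b$, which is incompatible with the boundary values once $n\ge 3$. Fix such a $z_1$ and set $V_1:=dU_{z_1}(T_{z_1}\Omega)$. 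Second, whenever $V_{j-1}\subsetneq\mathbb{R}^{n-1}$, the same nondegeneracy yields $z_j$ with $dU_{z_j}(T_{z_j}\Omega)\not\subset V_{j-1}$, else $U(\Omega)$ would lie in an affine translate of $V_{j-1}$. Each step enlarges $\dim V_j$ by at least $1$, so $\dim V_{n-2}\ge 2+(n-3)=n-1$—precisely the place where $n\ge 3$ is used. Consequently
\[\Phi:\Omega^{n-2}\to\mathbb{R}^{n-1},\qquad(w_1,\ldots,w_{n-2})\longmapsto\sum_{j=1}^{n-2}U(w_j),\]
is a submersion at $(z_1,\ldots,z_{n-2})$.

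By openness of the image, I pick a rational vector $(p_1/q,\ldots,p_{n-1}/q)$ with common denominator $q\ge 1$ inside $\Phi(\Omega^{n-2})$, take a preimage $(z_1',\ldots,z_{n-2}')$ (chosen generically so that the points are distinct and remain in the regular locus), and set $k_j:=q-1\ge 0$. Then
\[\sum_{j=1}^{n-2}(k_j+1)\,u_k(z_j')=q\cdot\frac{p_k}{q}=p_k\in\mathbb{Z}\]
for each $k$, so \eqref{eq4} holds and Theorem \ref{thm2} supplies the $a_j$. The main obstacle is the rank/dimension step above: both the existence of an immersion point for $U$ and the inductive enlargement of the subspaces $V_j$ hinge on the nondegeneracy lemma and on having at least three boundary components, which is exactly where the hypothesis $n>2$ is used, together with the dimension inequality $2(n-2)\ge n-1$ that lets $n-2$ points suffice for surjectivity of $d\Phi$.
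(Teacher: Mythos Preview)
Your argument is correct and follows a genuinely different path from the paper's. Both reduce to the integer criterion of Theorem~\ref{thm2} with $u\equiv 0$ and then seek $n-2$ points at which the sums $\sum_j u_k(z_j)$ are simultaneously rational, but the mechanisms diverge from there. The paper works complex-analytically: it takes local holomorphic primitives $f_k$ with $u_k=\mathrm{Re}\,f_k$, uses Lemma~\ref{lemma9} to find points where the $(n-2)\times(n-2)$ matrix $(\partial f_k/\partial z(z_j'))_{k,j\le n-2}$ is invertible, inverts the holomorphic map $F=(F_1,\ldots,F_{n-2})$ with $F_k=\sum_j f_k$, and then handles the leftover coordinate $F_{n-1}$ by contradiction---assuming $U_{n-1}\circ F^{-1}$ never takes rational values on rational real slices, pluriharmonicity forces it to be multilinear in the real parts, the resulting polynomial identity among the $u_k$ extends to $\overline{\Omega}^{\,n-2}$ by real-analyticity, and the boundary values on $\Gamma_{n-1}$ versus $\Gamma_n$ give $n-2=0$. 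You instead work with the real harmonic-measure map $U=(u_1,\ldots,u_{n-1})$ directly: the affine independence of $u_1,\ldots,u_{n-1},1$ (the same fact underlying Lemma~\ref{lemma9} and the paper's boundary contradiction) lets you enlarge $\sum_j dU_{z_j}(T_{z_j}\Omega)$ inductively until $\Phi$ is a real submersion onto $\mathbb{R}^{n-1}$, after which the open mapping theorem produces a rational image point at once. Your route is more elementary and eliminates the contradiction step entirely; the only extra ingredient is the harmonic rigidity $du_k\wedge du_l\equiv 0\Rightarrow u_l=au_k+b$ for the rank-$2$ base case, which follows since $(\partial u_k/\partial z)/(\partial u_l/\partial z)$ is then a real-valued meromorphic function and hence constant.
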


In \cite{X-Z}, using Theorem 1.9 in \cite{G-S-Y} (see also Theorem \ref{thm2} for the case $m=1$), Xu-Zhou proved that when $m=1$ and $n\ge 3$, given any $M\gg1$, there exists a family of
smoothly bounded $n$-connected domain $\Omega$ such that no point of $\Omega$ can satisfy the
equality in $k$-order Suita conjecture, where $k= 0,1,\ldots,M$. We can generalize this result  to arbitrary $m\ge 1$ using Theorem \ref{thm2}.
 
\begin{Theorem}
\label{thm11}
Let $m$ be an arbitrary positive integer, $n\ge m+2$. Given any $M\gg1$, there exists a family of
smoothly bounded $n$-connected domain $\Omega$ such that when $u=0$ on $\Omega$, $z_1,\ldots,z_m$ are arbitrary distinct points of $\Omega$, $k_1,\ldots,k_m$ are arbitrary nonnegative integers such that $k_i\le M$ for each $i$, $a_1,\ldots,a_m$ are arbitrary numbers, equality
\[1= \left(\int_0^{+\infty}c(s)e^{-s}ds\right)\left(\sum_{j=1}^m\frac{\pi|a_j|^2e^{-2\alpha_j}}{p_j c_{\beta}(z_j)^{2(k_j+1)}}\right)B_{\Omega,\rho}^{\mathcal{K}}(\mathcal{Z},\mathcal{A})\]
does not hold.
\end{Theorem}

\section{preparation}
In this section, we will do some preparations.

\subsection{Characters on open Riemann surfaces}\quad\par
Let $\Omega$ be an open Riemann surface admitted a nontrivial Green function $G_{\Omega}$. Let $p:\Delta\rightarrow\Omega$ be the universal covering from the unit disc $\Delta$
to $\Omega$. Let $z_0$ and $z_1$ be different points in $\Omega$. Since $p:\Delta\rightarrow\Omega$ is the universal covering, any element in $\pi_1(\Omega,z_0)$ can be represent by $p\circ\widetilde{\gamma}$, where $\widetilde{\gamma}$ is a curve in $\Delta$, 
and it depends only on the starting point and end point of $\widetilde{\gamma}$. Therefore we can assume that $\widetilde{\gamma}$ is piecewise smooth and lies in $\Delta\backslash p^{-1}(z_1)$. Thus, any element $\alpha\in\pi_1(\Omega,z_0)$ can be represented by a 
piecewise smooth curve $\gamma_{\alpha}\subset\Omega\backslash\{z_1\}$.

Recall that
$\widetilde{d}=\frac{\partial-\bar{\partial}}{i},$
and $u$ is a harmonic function on $\Omega$.
\begin{Lemma}
[see \cite{G-S-Y}]
\label{lemma1}    
Let $\alpha$ be an element in $\pi_1(\Omega,z_0)$, $\gamma_{\alpha}\subset\Omega\backslash\{z_1\}$ be a piecewise smooth curve representing $\alpha$. Then we have
\[\chi_{z_1}(\alpha)=e^{i\int_{\gamma_{\alpha}}\widetilde{d}G_{\Omega}(\cdot,z_1)}\]
and
\[\chi_{u}(\alpha)=e^{i\int_{\gamma_{\alpha}}\widetilde{d}u}.\]
\end{Lemma}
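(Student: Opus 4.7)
The plan is to give essentially the same argument twice: once for a globally defined harmonic function $u$, and once for $G_\Omega(\cdot,z_1)$, where the only complication is a logarithmic singularity that the curve avoids by hypothesis. Fix a base point $z_0'\in p^{-1}(z_0)$, let $\widetilde{\gamma}_\alpha$ be the lift of $\gamma_\alpha$ starting at $z_0'$, and let $g\in\mathrm{Deck}(p)$ be the deck transformation corresponding to $\alpha$, so $\widetilde{\gamma}_\alpha$ ends at $g(z_0')$. The key computation throughout is the Cauchy--Riemann identity: if $\widetilde{v}$ is a (local) harmonic conjugate of a real harmonic function $\widetilde{u}$, then a short check from $\bar\partial(\widetilde{u}+i\widetilde{v})=0$ gives $d\widetilde{v}=-i(\partial-\bar\partial)\widetilde{u}=\widetilde{d}\widetilde{u}$.

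For the harmonic formula, set $\widetilde{u}:=p^*u$ on $\Delta$. Since $\Delta$ is simply connected, $\widetilde{u}$ admits a global harmonic conjugate $\widetilde{v}$, and the holomorphic function $F:=e^{\widetilde{u}+i\widetilde{v}}$ satisfies $|F|=p^*(e^u)$. By uniqueness of $f_u$ up to a unimodular constant we may assume $f_u=F$. Because $g^*\widetilde{u}=\widetilde{u}$, the defining relation $g^*f_u=\chi_u(\alpha)f_u$ forces $g^*\widetilde{v}-\widetilde{v}$ to be a real constant $c_g$, and $\chi_u(\alpha)=e^{ic_g}$. Then
\[
c_g=\widetilde{v}(g(z_0'))-\widetilde{v}(z_0')=\int_{\widetilde{\gamma}_\alpha}d\widetilde{v}=\int_{\widetilde{\gamma}_\alpha}\widetilde{d}\widetilde{u}=\int_{\widetilde{\gamma}_\alpha}p^*\widetilde{d}u=\int_{\gamma_\alpha}\widetilde{d}u,
\]
which is the second identity.

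For the Green-function identity, let $\widetilde{G}:=p^*G_\Omega(\cdot,z_1)$, which is harmonic on $\Delta\setminus p^{-1}(z_1)$ with logarithmic poles on $p^{-1}(z_1)$, and $\log|f_{z_1}|=\widetilde{G}$ where $f_{z_1}$ has simple zeros precisely on $p^{-1}(z_1)$. Because $\gamma_\alpha\subset\Omega\setminus\{z_1\}$, the lift $\widetilde{\gamma}_\alpha$ misses $p^{-1}(z_1)$, so a continuous branch of $\log f_{z_1}$ exists along $\widetilde{\gamma}_\alpha$; its real part is $\widetilde{G}$, and by the same Cauchy--Riemann computation its imaginary part $\widetilde{v}$ satisfies $d\widetilde{v}=\widetilde{d}\widetilde{G}=p^*\widetilde{d}G_\Omega(\cdot,z_1)$ along the curve. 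The relation $g^*f_{z_1}=\chi_{z_1}(\alpha)f_{z_1}$ gives
\[
\log\chi_{z_1}(\alpha)=\log f_{z_1}(g(z_0'))-\log f_{z_1}(z_0')=\int_{\widetilde{\gamma}_\alpha}d(\log f_{z_1}),
\]
and since $|\chi_{z_1}(\alpha)|=1$ the real part vanishes, leaving $\log\chi_{z_1}(\alpha)=i\int_{\gamma_\alpha}\widetilde{d}G_\Omega(\cdot,z_1)$, as desired.

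The only real points to watch are the sign in the Cauchy--Riemann derivation of $d\widetilde{v}=\widetilde{d}\widetilde{u}$ and, in the Green-function case, the fact that a continuous branch of $\log f_{z_1}$ along $\widetilde{\gamma}_\alpha$ requires the path to avoid $p^{-1}(z_1)$; this is exactly guaranteed by the hypothesis $\gamma_\alpha\subset\Omega\setminus\{z_1\}$. Neither step presents a genuine obstacle, so the lemma reduces to this bookkeeping.
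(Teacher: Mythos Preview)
The paper does not actually prove this lemma; it is quoted from \cite{G-S-Y} and stated without argument. So there is no in-paper proof to compare against.

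Your proof is correct and is the standard argument one would expect: lift to the simply connected cover, produce a harmonic conjugate (globally for $u$, along the lifted path for $G_\Omega(\cdot,z_1)$), and read off the character as the increment of the conjugate along the lift. The Cauchy--Riemann computation $d\widetilde v=\widetilde d\,\widetilde u$ is right with the paper's convention $\widetilde d=(\partial-\bar\partial)/i$, and your handling of the logarithmic pole---namely, that $\gamma_\alpha\subset\Omega\setminus\{z_1\}$ ensures the lift avoids the zero set of $f_{z_1}$, so a continuous branch of $\log f_{z_1}$ exists along it---is exactly the point that needs care. One small cosmetic remark: when you write ``by uniqueness of $f_u$ up to a unimodular constant we may assume $f_u=F$,'' it would be slightly cleaner to say that any two choices of $f_u$ differ by a unimodular constant and hence yield the same character $\chi_u$, so the formula is independent of the choice; but this does not affect the validity of the argument.
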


Let $\gamma\subset\Omega$ be a piecewise smooth closed curve.  
The following lemma shows that the pull-back of the function $\int_{\gamma}\widetilde{d}G_{\Omega}(\cdot,z)$ defined in $\Omega\backslash\gamma$ can be extended to a harmonic function on $\Delta$ modulo integer multiples of $2\pi$.
\begin{Lemma}
[see \cite{G-S-Y}]
\label{lemma7}
There exists a harmonic function $H(z')$ on $\Delta$ such that $H(z')=\int_{\gamma}\widetilde{d}G_{\Omega}(\cdot,p(z'))+2k\pi$ for $z'\in\Delta\backslash p^{-1}(\gamma)$, where $k$ is an integer depending on $z'$.
\end{Lemma}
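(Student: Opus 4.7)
My plan is to study $\Phi(z):=\int_\gamma\widetilde{d}_w G_\Omega(w,z)$, initially defined on $\Omega\setminus\gamma$, show that it is harmonic there with constant jumps of $\pm 2\pi$ across $\gamma$, and then exploit the simple connectedness of $\Delta$ to lift it to a single-valued harmonic function $H$ on $\Delta$.

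Harmonicity of $\Phi$ on $\Omega\setminus\gamma$ would follow by differentiating under the integral sign, since $G_\Omega(w,z)$ is jointly smooth on a neighborhood of $\gamma\times\{z\}$ and harmonic in $z$ for $w\neq z$. For the jump structure, I would use the meromorphic $(1,0)$-form $\omega_z:=2\partial_w G_\Omega(w,z)$ in the variable $w$, which has a single simple pole at $w=z$ of residue $1$; since $\gamma$ is closed, $\int_\gamma dG_\Omega(\cdot,z)=0$, so $\int_\gamma\omega_z=i\Phi(z)$. When $z$ crosses $\gamma$ transversally, the homology class of $\gamma$ in $\Omega\setminus\{z\}$ changes by that of a small loop around $z$ (with sign dictated by the orientation of the crossing), and the residue theorem yields a jump of $\pm 2\pi i$ in $\int_\gamma\omega_z$, hence $\pm 2\pi$ in $\Phi$. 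Because this jump is constant along $\gamma$, the differential $d\Phi$ carries no distributional contribution supported on $\gamma$ and extends to a smooth harmonic $1$-form $\eta$ on all of $\Omega$; this can be checked locally using the Green function expansion $G_\Omega(w,z)=\log|w-z|+h(w,z)$ with $h$ smooth, combined with the Plemelj--Sokhotski formulas applied to the Cauchy-type integral arising from $\omega_z$.

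To construct $H$, observe that each connected component of $p^{-1}(\gamma)$ in $\Delta$ is a properly embedded arc (lifts of a closed curve cannot close up in a simply connected cover). I would fix a base component $U_0$ of $\Delta\setminus p^{-1}(\gamma)$, and for any other component $U$ define $k(U)\in\mathbb{Z}$ as the signed count of arcs crossed along a path in $\Delta$ from $U_0$ to $U$; independence of path follows from $\Delta$ being simply connected, since any two such paths differ by a contractible loop and a contractible loop has zero signed intersection with every properly embedded arc. Setting $H:=p^*\Phi+2\pi k$ on each component, $H$ is continuous on $\Delta$ by construction; smoothness of $dH=p^*\eta$ across $p^{-1}(\gamma)$ gives that $H$ is smooth on $\Delta$, and being a primitive of the harmonic $1$-form $p^*\eta$ on the simply connected $\Delta$, $H$ is harmonic. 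This yields $H(z')=\int_\gamma\widetilde{d}G_\Omega(\cdot,p(z'))+2\pi k(z')$ with $k(z')\in\mathbb{Z}$ for every $z'\in\Delta\setminus p^{-1}(\gamma)$, as required.

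The main obstacle is the smooth extension of $d\Phi$ across $\gamma$. The tangential derivative of $\Phi$ is immediately continuous across $\gamma$ since the jump is constant, but matching the normal derivative is more delicate and ultimately rests on the specific analytic structure of the Green function. The residue viewpoint via $\omega_z$ packages this content cleanly by reducing the jump of $\Phi$ to a single residue computation, yet a careful local analysis --- separating the logarithmic singularity of $G_\Omega(\cdot,z)$ from a smooth remainder and handling the resulting principal-value integral along $\gamma$ --- is still needed to conclude that $d\Phi$, rather than $\Phi$ itself, extends smoothly across $\gamma$, which is precisely what makes the gluing step on $\Delta$ produce a harmonic (not merely continuous) function.
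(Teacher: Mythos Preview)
The paper does not prove this lemma; it is quoted from \cite{G-S-Y} without argument, so there is no in-paper proof to compare your proposal against.

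That said, your outline is essentially correct. One comment on the step you single out as the main obstacle: the smooth extension of $d\Phi$ across $\gamma$ can be obtained without Plemelj--Sokhotski. In a local chart near a point of $\gamma$, write $G_\Omega(w,z)=\log|w-z|+h(w,z)$ with $h$ smooth, and note that $\widetilde{d}_w\log|w-z|=d_w\arg(w-z)$. The singular contribution to $\Phi(z)$ from a subarc of $\gamma$ with endpoints $a,b$ is then $\arg(b-z)-\arg(a-z)$ (for suitable branches), whose differential in $z$ is a single-valued smooth $1$-form away from the two points $\{a,b\}$ and in particular across the interior of the subarc; covering $\gamma$ by overlapping subarcs yields the smooth extension of $d\Phi$ across all of $\gamma$. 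Your lifting argument on $\Delta$ then goes through verbatim. Also note that your parenthetical ``lifts of a closed curve cannot close up in a simply connected cover'' is only true when $\gamma$ is non-contractible; for contractible $\gamma$ the lifts are closed, but your signed-intersection count is still well-defined since a contractible loop in $\Delta$ has zero algebraic intersection with any closed curve as well.
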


Recall that $\Pi_1$ is a subset of $\pi_1(\Omega)$ which can generate $\pi_1(\Omega)$. The following lemma will be used in the proof of Theorem \ref{thm1}.

\begin{Lemma} 
\label{lemma2}
$\chi_{-u}=\prod_{1\le j\le m}\chi_{z_j}^{k_j+1}$ holds if and only if 
\[e^{-i\int_{\gamma_{\alpha}}\widetilde{d}u}=e^{i\sum_{1\le j\le m}(k_j+1)\int_{\gamma_{\alpha}}\widetilde{d}G_{\Omega}(\cdot,z_j)}\] 
holds for any $\alpha\in\Pi_1$ and piecewise smooth curve $\gamma_{\alpha}\subset\Omega\backslash\{z_1,\ldots,z_m\}$ which can represent $\alpha$.
\end{Lemma}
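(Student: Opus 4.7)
The plan is to use Lemma \ref{lemma1} to translate the equality of characters into the displayed integral identity one element at a time, and then exploit the fact that a character is a group homomorphism $\pi_1(\Omega)\to S^1$, hence is determined by its values on any generating set.

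First I would fix an arbitrary $\alpha\in\pi_1(\Omega)$. Since $\{z_1,\ldots,z_m\}$ is finite and $\Omega$ is a Riemann surface, any piecewise smooth loop representing $\alpha$ can be perturbed to avoid these points, so a representative $\gamma_{\alpha}\subset\Omega\setminus\{z_1,\ldots,z_m\}$ of the required form exists. Applying Lemma \ref{lemma1} with the harmonic function $-u$ and with each Green function $G_{\Omega}(\cdot,z_j)$, I obtain
\[\chi_{-u}(\alpha)=e^{-i\int_{\gamma_{\alpha}}\widetilde{d}u},\qquad \chi_{z_j}(\alpha)=e^{i\int_{\gamma_{\alpha}}\widetilde{d}G_{\Omega}(\cdot,z_j)},\]
and therefore
\[\prod_{1\le j\le m}\chi_{z_j}^{k_j+1}(\alpha)=e^{i\sum_{1\le j\le m}(k_j+1)\int_{\gamma_{\alpha}}\widetilde{d}G_{\Omega}(\cdot,z_j)}.\]
Hence, for each individual $\alpha$ and each admissible representative $\gamma_{\alpha}$, the equality $\chi_{-u}(\alpha)=\prod_{j}\chi_{z_j}^{k_j+1}(\alpha)$ is equivalent to the displayed integral identity along $\gamma_{\alpha}$.

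The ``only if'' direction follows immediately: assuming the character identity on all of $\pi_1(\Omega)$, it holds in particular for every $\alpha\in\Pi_1$, and the translation above yields the integral identity. For the ``if'' direction, suppose the integral identity holds for every $\alpha\in\Pi_1$ and every admissible representative. Reversing the translation gives $\chi_{-u}(\alpha)=\prod_{j}\chi_{z_j}^{k_j+1}(\alpha)$ for all $\alpha\in\Pi_1$. Since both sides are group homomorphisms $\pi_1(\Omega)\to S^1$, the set on which they agree is a subgroup of $\pi_1(\Omega)$; it contains $\Pi_1$, so by the generation hypothesis it coincides with $\pi_1(\Omega)$, and the two characters are equal. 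There is no real obstacle here: the argument is essentially bookkeeping on top of Lemma \ref{lemma1}, with the only subtleties being the ability to deform curves off a finite set and the already built-in fact that the exponential of the line integral of $\widetilde{d}G_{\Omega}(\cdot,z_j)$ depends only on the homotopy class of loops in $\Omega$ avoiding $z_j$.
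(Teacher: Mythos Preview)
Your argument is correct, and it takes a genuinely different route from the paper. You work directly at the level of characters on $\pi_1(\Omega)$: since $\chi_{-u}$ and $\prod_j\chi_{z_j}^{k_j+1}$ are both homomorphisms into the abelian group $S^1$, their agreement set is a subgroup, and containing the generating set $\Pi_1$ forces it to be everything. The paper instead first uses Lemma~\ref{lemma1} to rephrase the character identity as an integral identity over \emph{all} loops in $\Omega\setminus\{z_1,\ldots,z_m\}$, and then reduces from arbitrary loops to the $\gamma_\alpha$ with $\alpha\in\Pi_1$ by passing to homology: it writes $H_1(\Omega\setminus\{z_1,\ldots,z_m\})\cong H_1(\Omega)\oplus\mathbb{Z}^m$, decomposes any loop as a sum of the $\gamma_\alpha$ and small loops $\gamma_l'$ encircling each $z_l$, and checks by direct computation that $\int_{\gamma_l'}\widetilde d G_\Omega(\cdot,z_j)\in 2\pi\mathbb{Z}$ and $\int_{\gamma_l'}\widetilde d u=0$, so the extra generators contribute trivially after exponentiation. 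Your approach is shorter and avoids any computation; the paper's approach is more explicit and yields as a byproduct the period calculation over the small loops, which is the sort of identity that resurfaces later (for instance in the proof of Theorem~\ref{thm2}).
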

	
\begin{proof}
It follows from Lemma \ref{lemma1} that 
\[\chi_{-u}=\prod_{1\le j\le m}\chi_{z_j}^{k_j+1}\]
holds if and only if 
\[e^{-i\int_{\gamma}\widetilde{d}u}=e^{i\sum_{1\le j\le m}(k_j+1)\int_{\gamma}\widetilde{d}G_{\Omega}(\cdot,z_j)}\] 
holds for any smooth closed curve $\gamma\subset\Omega\backslash\{z_1,\ldots,z_m\}$ with starting point and end point $z_0$. 

For $1\le l\le m$, let $\gamma_l'$ be a smooth closed curve in $U_{l}\backslash\{z_l\}\subset\Omega\backslash\{z_1,\ldots,z_m\}$, where $U_{l}$ is a small enough local coordinate ball with the center $z_{l}$ such that $z_{l'}\not\in U_l$ for $l'\not=l$. We require that $\gamma_l'$ winds around $z_l$ once. 
Since the first homology group $H_1(\Omega)$ is the abelization of $\pi_1(\Omega)$, it is generated by the homology class of element of $\Pi_1$. 
Note that $H_1(\Omega\backslash\{z_1,\ldots,z_m\})\cong H_1(\Omega)\oplus\mathbb{Z}^{ m}$, 
and the homology classes of element of $\Pi_1$ and the homology classes of $\gamma_l'$ for $1\le l\le m$ generate $H_1(\Omega\backslash\{z_1,\ldots,z_m\})$. 
	
Let $\gamma\subset\Omega\backslash \{z_1,\ldots,z_m\}$ be a smooth curve with starting point and end point $z_0$. There exists a sequence of integers $t_{\alpha}$ for $\alpha\in\Pi_1$ (finite many $t_{\alpha}$ are nonzero) and $t_l'$ for $1\le l\le m$ such that 
$\sum_{\alpha\in\Pi_1}t_{\alpha} \gamma_{\alpha}+\sum_{l=1}^m t_{l}'\gamma_l'$ is homologous to $\gamma$ in $\Omega\backslash\{z_1,\ldots,z_m\}$, where $\gamma_{\alpha}\subset\Omega\backslash\{z_1,\ldots,z_m\}$ is a piecewise smooth curve representing $\alpha$.	
	
For $1\le j\le m$, $G_{\Omega}(z,z_j)$ and $u$ are harmonic functions on $\Omega\backslash\{z_1,\ldots,z_m\}$, $\widetilde{d} G_{\Omega}(\cdot,z_j)$ and $\widetilde{d} u$ are therefore closed differential 1-forms on $\Omega\backslash\{z_1,\ldots,z_m\}$. 
Therefore we have   
\[\int_{\gamma}\widetilde{d}G_{\Omega}(\cdot,z_j)=\sum_{\alpha\in\Pi_1}t_{\alpha}\int_{\gamma_k}\widetilde{d}G_{\Omega}(\cdot,z_j)+\sum_{l=1}^{m}t_l'\int_{\gamma_l'}\widetilde{d}G_{\Omega}(\cdot,z_j)\]
for $1\le j\le m$ and
\[\int_{\gamma}\widetilde{d}u=\sum_{\alpha\in\Pi_1}t_{\alpha}\int_{\gamma_{\alpha}}\widetilde{d}u+\sum_{l=1}^{m}t_l'\int_{\gamma_l'}\widetilde{d}u.\]
	
Note that for $1\le j\le m$, $G_{\Omega}(z,z_j)=\log|z-z_j|+u'_j(z)$ on $\Omega$ on a local coordinate neighborhood $U_{j}$ of $z_j$ with coordinate $z$, where $u'_j$ is a harmonic function on $U_{j}$, hence 
\[\int_{\gamma'_l}\widetilde{d}G_{\Omega}(\cdot,z_j)=\int_{\gamma_l'} \widetilde{d}\log|\cdot-z_j|+\int_{\gamma_l'} \widetilde{d}u_j'=0\]
for $1\le l\le m$ and $l\neq j$, 
\[\int_{\gamma'_j}\widetilde{d}G_{\Omega}(\cdot,z_j)=\int_{\gamma_j'}\widetilde{d} \log|\cdot-z_j|+\int_{\gamma_j'}\widetilde{d} u_j'=2\pi\]
and
\[\int_{\gamma_l'}\widetilde{d}u=0.\]
for $1\le l\le m$.
	
From these results, we can see that 
\[e^{-i\int_{\gamma}\widetilde{d}u}=e^{i\sum_{1\le j\le m}(k_j+1)\int_{\gamma}\widetilde{d}G_{\Omega}(\cdot,z_j)}\] 
holds for any smooth closed curve $\gamma\subset\Omega\backslash\{z_1,\ldots,z_m\}$ with starting point and end point $z_0$ if and only if 
\[e^{-i\int_{\gamma_{\alpha}}\widetilde{d}u}=e^{i\sum_{1\le j\le m}(k_j+1)\int_{\gamma_{\alpha}}\widetilde{d}G_{\Omega}(\cdot,z_j)}\] 
holds for any $\alpha\in\Pi_1$ and piecewise smooth curve $\gamma_{\alpha}\subset\Omega\backslash\{z_1,\ldots,z_m\}$ representing $\alpha$.
Lemma \ref{lemma2} is therefore proved.
\end{proof}

\subsection{Some lemmas on planar domains}\quad\par
In this section, assume that $\Omega\subset\mathbb{C}$ is a bounded  planar domain  bounded by $n$ analytic Jordan curves and $\mathbb{C}\backslash\Omega=\bigcup_{j=1}^{n} A_j$, where $\{A_j\}$ are disjoint connected closed sets and $A_n$ is
the unbounded connected component of $\mathbb{C}\backslash\Omega$. Denote by $\Gamma_k$ the boundary of $A_k$, and  $\Gamma:=\cup_k\Gamma_k$.

Firstly, we recall the following result in algebraic topology.

\begin{Lemma}[see \cite{fulton}]\label{lemma8} 
Let $U$ be a bounded connected open subset of $\mathbb{C}$ such that $\mathbb{C}\backslash U=\bigcup_{j=1}^n A_j$, where $A_j$ are disjoint connected closed sets and $A_n$ is unbounded. For each $1\le j\le n-1$, there exists a closed smooth curve $\gamma_j$ in $\Omega$ which winds around points in $A_j$
once and does not wind around points in $A_l$ for $l\neq j$. If $\gamma_j'$ is an another closed smooth curve which satisfies these conditions, then $\gamma_j'$ is homologous to $\gamma_j$. 

In further, the homology classes of these $n-1$ curves form a basis for the first homology group $H_1(U)$, i.e., these curves are not homologous to each other and any curve in $U$ is homologous to some linear combinations of these curves. 
\end{Lemma}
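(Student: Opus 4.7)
The plan is to organize the lemma into three parts: existence of the curves $\gamma_j$, uniqueness of their homology class, and the basis property for $H_1(U)$. The central tool throughout is the winding number map. For each bounded component $A_j$ (i.e. $1\le j\le n-1$), the winding number of a cycle in $U$ about a point $p\in A_j$ depends only on the homology class of the cycle in $U$ and on the component of $\mathbb{C}\setminus U$ containing $p$; this gives a well-defined homomorphism $w_j:H_1(U)\to\mathbb{Z}$, and combining these yields $w:=(w_1,\dots,w_{n-1}):H_1(U)\to\mathbb{Z}^{n-1}$. The unbounded component $A_n$ contributes trivially, since any cycle has winding number zero about points sufficiently far away.

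For existence, I would fix $j$ and use compactness: since $A_j$ is compact and disjoint from the closed set $\bigcup_{l\neq j}A_l$, there is an $\varepsilon$ so that the closed $\varepsilon$-neighborhood of $A_j$ meets no other $A_l$ and is bounded. A standard grid-refinement argument (cover $A_j$ by closed squares of a sufficiently fine grid and take the union $K_j$ of those squares; then $\partial K_j\subset U$ is a finite disjoint union of piecewise linear Jordan curves whose oriented sum winds once about each point of $A_j$ and zero times about each point of $A_l$, $l\neq j$). Smoothing corners, and if a single curve is desired, connecting the resulting Jordan curves by pairs of back-and-forth arcs (which cancel in homology), produces the required smooth closed $\gamma_j$.

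For uniqueness and the basis property, I would invoke the homological form of Cauchy's theorem in the plane: a cycle $\gamma$ in an open set $U\subset\mathbb{C}$ is null-homologous in $U$ if and only if its winding number about every point of $\mathbb{C}\setminus U$ vanishes. Applying this to $\gamma_j-\gamma_j'$ yields uniqueness. For the basis claim, the identity $w([\gamma_j])=e_j$ (the $j$-th standard basis vector) gives linear independence, and for an arbitrary cycle $\gamma$ in $U$, setting $m_j:=w_j(\gamma)$ the cycle $\gamma-\sum_{j=1}^{n-1}m_j\,\gamma_j$ has winding number zero about every point of $\mathbb{C}\setminus U$ (automatically zero about the unbounded $A_n$), hence is null-homologous and therefore $[\gamma]=\sum_j m_j[\gamma_j]$.

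The one nontrivial input is the homological Cauchy criterion above. I expect this to be the main obstacle for a fully self-contained presentation, since its proof requires either a careful grid argument (exhaust $U$ by a union of small closed squares lying in $U$, whose boundary-sum cycle realizes an explicit homology between $\gamma$ and a sum of small boundary squares, each of which has winding number $1$ about exactly one point) or an appeal to Alexander duality in $S^2$. Since the statement is cited from Fulton's textbook, in practice I would just cite it there rather than reprove it.
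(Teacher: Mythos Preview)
The paper does not prove this lemma at all; it simply cites it from Fulton's textbook (the label reads ``see \cite{fulton}'') and moves on. Your proposal is therefore not competing with any argument in the paper --- you are supplying a proof where the authors supply only a reference.

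That said, your outline is correct and is essentially the standard argument one finds in Fulton. The winding-number homomorphism $w=(w_1,\dots,w_{n-1}):H_1(U)\to\mathbb{Z}^{n-1}$ is exactly the right organizing device; the grid/square construction for existence is the usual one; and you have correctly isolated the one substantive input, namely the homological form of Cauchy's theorem (a cycle in $U$ is null-homologous if and only if it has winding number zero about every point of $\mathbb{C}\setminus U$), which handles both uniqueness and spanning. Your remark that this criterion is itself the nontrivial step, and that in practice one cites it rather than reproves it, is accurate and matches what the paper's authors do by citing Fulton wholesale.
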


Let us  recall  a famous result on the Dirichlet problem. 

\begin{Lemma}[see \cite{Ahlfors74} or \cite{G-S-Y}]
\label{lemma5}
The Dirichlet problem on $\Omega$ is always solvable, i.e., for any $f\in C(\Gamma)$, there is $\tilde f\in C(\overline{\Omega})$ s.t. $\tilde{f}|_{\Gamma}=f$  and $\tilde{f}$ is harmonic on $\Omega$.
\end{Lemma}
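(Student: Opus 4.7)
The plan is to apply the classical Perron method to construct $\tilde f$, and then to verify boundary regularity using the analyticity of $\Gamma$.

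Given $f\in C(\Gamma)$, I would first form the Perron family
\[
\mathcal{F}_f := \{v \text{ subharmonic on }\Omega : \limsup_{z\to\zeta}v(z)\le f(\zeta) \text{ for every } \zeta\in\Gamma\},
\]
and set $\tilde f(z):=\sup_{v\in\mathcal{F}_f} v(z)$. Since constants not exceeding $\min_\Gamma f$ lie in $\mathcal{F}_f$ and every element of $\mathcal{F}_f$ is bounded above by $\max_\Gamma f$ (by the maximum principle applied on relatively compact subdomains), $\tilde f$ is finite on $\Omega$. A standard Poisson modification argument on disks then shows that $\tilde f$ is harmonic on $\Omega$.

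The second and more delicate step is to verify $\lim_{z\to\zeta_0}\tilde f(z)=f(\zeta_0)$ for every $\zeta_0\in\Gamma$, i.e.\ that each boundary point is \emph{regular}. For this I would construct a local barrier at each $\zeta_0$. Because $\zeta_0$ lies on an analytic Jordan curve $\Gamma_k$, there exist a neighbourhood $U\ni\zeta_0$ and a biholomorphism $\phi:U\to\mathbb{D}$ with $\phi(\zeta_0)=0$ sending $\Gamma_k\cap U$ to the real diameter and $\Omega\cap U$ to one of the open half-disks. An elementary harmonic function of $\phi$ (e.g.\ $\pm\,\mathrm{Im}\,\phi$, with the sign dictated by which half-disk $\Omega\cap U$ occupies) then provides a local barrier at $\zeta_0$. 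By the classical Bouligand lemma, the existence of a local barrier forces $\zeta_0$ to be regular for the Perron solution, and hence $\tilde f$ extends continuously across $\zeta_0$ with the correct boundary value $f(\zeta_0)$.

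The main obstacle is the barrier construction. For a general planar boundary this would require the Wiener criterion, but the analyticity of each $\Gamma_k$ reduces the matter to a trivial local straightening by a conformal map. The Perron step itself is entirely classical, so the argument is just a matter of assembling these two standard ingredients; indeed the result can be quoted directly from \cite{Ahlfors74}.
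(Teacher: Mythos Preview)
The paper does not supply a proof of this lemma; it is simply quoted from \cite{Ahlfors74} (or \cite{G-S-Y}).  Your Perron outline is exactly the classical argument and is correct in structure.

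There is, however, one small technical slip in the barrier step.  The function $\pm\,\mathrm{Im}\,\phi$ vanishes along the \emph{entire} straightened arc $\Gamma_k\cap U$, not only at $\zeta_0$.  Consequently its infimum over $\partial U\cap\Omega$ is $0$, and the usual ``$\max(b,-c)$ inside $U$, $-c$ outside $U$'' gluing that turns a local barrier into a global one cannot be carried out: no $c>0$ makes the patched function subharmonic across $\partial U\cap\Omega$.  So $\pm\,\mathrm{Im}\,\phi$ is not a barrier in the sense required by the Bouligand lemma.

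The repair is immediate and does not disturb your overall plan.  Since each $\Gamma_k$ is a smooth (analytic) Jordan curve bounding a complementary component $A_k$ with nonempty interior, every $\zeta_0\in\Gamma_k$ satisfies the exterior-disk condition: there is a closed disk $\overline{D(a,r)}\subset A_k$ with $\overline{D(a,r)}\cap\Gamma_k=\{\zeta_0\}$.  Then
\[
\beta(z)=\log\frac{|z-a|}{r}
\]
is harmonic on $\Omega$, strictly positive on $\overline{\Omega}\setminus\{\zeta_0\}$, and tends to $0$ at $\zeta_0$; this is a genuine (global) barrier.  Alternatively one may bypass the explicit construction entirely by observing that each $A_k$ is a nondegenerate continuum, which already forces every point of $\Gamma_k$ to be regular.
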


We will also need the following well-known formula.
\begin{Lemma}[The Green third formula, see \cite{nehari}]\label{lemma10}
	 Let $f\in C(\overline{\Omega})$, which is  harmonic on $\Omega$.  Then we have
	\[f(z_0)=\frac{1}{2\pi}\int_{\Gamma}f\frac{\partial G_{\Omega}(\cdot,z_0)}{\partial n}ds\]
	for $z_0\in\Omega$, where $\frac{\partial}{\partial n}$ means the differentiation in the direction of the outward pointing normal, 
	$s$ is the arc-length parameter.
\end{Lemma}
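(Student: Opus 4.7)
The plan is to deduce the formula from Green's second identity by excising a small disc around $z_0$ to isolate the logarithmic singularity of $G_\Omega(\cdot,z_0)$, and then shrinking the disc to a point.

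Fix $\epsilon>0$ so small that $\overline{D_\epsilon(z_0)}\subset\Omega$, and write $G_\Omega(z,z_0)=\log|z-z_0|+h_{z_0}(z)$ with $h_{z_0}$ harmonic in a neighbourhood of $z_0$. Since $\Gamma$ consists of analytic Jordan curves, Schwarz reflection shows that $G_\Omega(\cdot,z_0)$ extends real-analytically across $\Gamma$, so $\partial_n G_\Omega$ is smooth on $\Gamma$. To apply Green's identity when $f$ is only assumed continuous on $\overline\Omega$, I would first exhaust $\Omega$ from the inside by smoothly bounded subdomains $\Omega^{(t)}\Subset\Omega$ with boundaries $\Gamma^{(t)}\to\Gamma$, and apply Green's second identity on $\Omega^{(t)}\setminus\overline{D_\epsilon(z_0)}$ with $u=f$ and $v=G_\Omega(\cdot,z_0)$; both are harmonic there, so the volume term vanishes and one obtains
\[
0=\int_{\Gamma^{(t)}}\!\!\left(f\frac{\partial G_\Omega}{\partial n}-G_\Omega\frac{\partial f}{\partial n}\right)ds-\int_{\partial D_\epsilon(z_0)}\!\!\left(f\frac{\partial G_\Omega}{\partial n}-G_\Omega\frac{\partial f}{\partial n}\right)ds,
\]
where the minus sign reflects that the outward normal of the punctured domain on $\partial D_\epsilon(z_0)$ points toward $z_0$.

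Sending $\epsilon\to 0$ first, the contributions from $\partial D_\epsilon(z_0)$ collapse to $-2\pi f(z_0)$: the dominant piece is $\int f\,\partial_n\log|z-z_0|\,ds=f(z_0)\cdot(-1/\epsilon)\cdot 2\pi\epsilon+o(1)$, while $\int G_\Omega\,\partial_n f\,ds=O(\epsilon\log\epsilon)$. Then letting $t\to 1$, $G_\Omega$ vanishes uniformly on $\Gamma^{(t)}$ as $\Gamma^{(t)}\to\Gamma$, while $\int_{\Gamma^{(t)}}f\,\partial_n G_\Omega\,ds\to\int_\Gamma f\,\partial_n G_\Omega\,ds$ by smoothness of $\partial_n G_\Omega$ on $\Gamma$ and uniform continuity of $f$ on $\overline\Omega$. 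The asserted formula follows.

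The main obstacle is the control of $\int_{\Gamma^{(t)}}G_\Omega\,\partial_n f\,ds$ as $t\to 1$, since $\partial_n f$ need not stay bounded near $\Gamma$ when $f$ is merely continuous on $\overline\Omega$. A clean way around this is to first establish the formula for $f\in C^2(\overline\Omega)$ (where both boundary terms are uniformly bounded), which identifies $\frac{1}{2\pi}\,\partial_n G_\Omega(\cdot,z_0)\,ds$ with the harmonic measure $d\omega_{z_0}$ of $\Omega$ at $z_0$. For a general $f$ continuous on $\overline\Omega$ and harmonic in $\Omega$, uniqueness of the Dirichlet solution (Lemma \ref{lemma5}) then forces $f(z_0)=\int_\Gamma f\,d\omega_{z_0}$, which is exactly the asserted identity. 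This reduces the lemma to the smooth case plus uniqueness for the Dirichlet problem and sidesteps the boundary-derivative issue entirely.
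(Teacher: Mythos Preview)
The paper does not prove this lemma at all; it is simply quoted as a classical fact with a citation to Nehari's book. Your argument is the standard proof via Green's second identity on the punctured domain, and the overall strategy is correct.

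Two small remarks. First, your treatment of the sign on $\partial D_\epsilon(z_0)$ is internally inconsistent: you insert a minus sign in front of the $\partial D_\epsilon$ integral to account for the orientation, which means $\partial_n$ in that integral should be the outward normal of $D_\epsilon$ (pointing away from $z_0$), giving $\partial_n\log|z-z_0|=+1/\epsilon$; yet you then write $-1/\epsilon$. The two sign slips cancel, so the conclusion is unaffected, but the presentation should be cleaned up. Second, your workaround for the boundary-regularity issue---proving the identity first for $f\in C^2(\overline\Omega)$ to identify $\tfrac{1}{2\pi}\partial_n G_\Omega(\cdot,z_0)\,ds$ with harmonic measure, then invoking uniqueness of the Dirichlet solution for continuous $f$---is exactly the right move and makes the argument rigorous under the stated hypothesis $f\in C(\overline\Omega)$.
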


By Lemma \ref{lemma5}, for each $1\le k\le n-1$, we can get a unique function $u_k\in C(\overline{\Omega})$ such that $u_k=1$ on $\Gamma_k$, $u_k=0$ on $\Gamma_l$ for any $l\neq k$ and $u_k$ is harmonic on $\Omega$.

Let $m\ge n-1$, and let $\{\widetilde{z}_1,\ldots,\widetilde{z}_m\}$ be $m$ distinct points in $\Omega$. For $1\le j\le m$, take a simply connected neighborhood $V_j$ of $\widetilde{z}_j$ such that 
$V_i\cap V_j=\emptyset$ when $i\neq j$.
Every $u_k$ can be realized as the real part of a holomorphic function $f_k$ on $V_0:=\cup_{j=1}^m V_j$. 

The following lemma will be used in the proof of Theorems \ref{thm9} and \ref{thm10}.
\begin{Lemma}
\label{lemma9}
For each $1\le l\le n-1$, there exists a point $(z_1',\ldots,z_{l}')\in V_1\times\cdots \times V_{l}$ such that the matrix $\left(\frac{\partial f_k}{\partial z}(z_j')\right)_{1\le k\le l,1\le j\le l}$ has rank $l$.
In particular, there exists a point $(z_1',\ldots,z_{n-1}')\in V_1\times\cdots\times V_{n-1}$ such that the matrix $\left(\frac{\partial f_k}{\partial z}(z_j')\right)_{1\le k\le n-1,1\le j\le n-1}$ has rank $n-1$.

\end{Lemma}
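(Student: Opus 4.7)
The plan is to reduce the rank condition to a statement about $\mathbb{C}$-linear independence of globally defined holomorphic functions on $\Omega$, then extract the required points by an inductive Vandermonde-style argument.

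First I would observe that, although each $f_k$ is only defined locally on $V_0$ (and only up to an additive imaginary constant on each component $V_j$), the derivative $\partial f_k/\partial z$ is the restriction of a single globally defined holomorphic function $\phi_k := 2\,\partial u_k/\partial z$ on $\Omega$; indeed $u_k$ harmonic gives $\partial_{\bar{z}} \phi_k = \frac{1}{2}\Delta u_k = 0$. Hence it suffices to find points $(z_1', \ldots, z_l') \in V_1 \times \cdots \times V_l$ with $(\phi_k(z_j'))_{1 \le k,j \le l}$ non-singular, and the crucial step is the claim that $\phi_1, \ldots, \phi_{n-1}$ are $\mathbb{C}$-linearly independent on $\Omega$.

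For the claim, I would suppose $\sum_k c_k \phi_k \equiv 0$ with $c_k = a_k + i b_k$ and $a_k, b_k \in \mathbb{R}$. Then $\Phi := \sum_k c_k u_k = U + iV$, with $U := \sum_k a_k u_k$ and $V := \sum_k b_k u_k$ real-valued and harmonic, satisfies $\partial_z \Phi = 0$; equivalently $U - iV$ is holomorphic, so $U$ is the real part of a holomorphic function on $\Omega$ and all of its periods vanish: $\sum_k a_k \int_{\gamma_j} \widetilde{d} u_k = 0$ for every $1 \le j \le n-1$. A Green's-identity computation (using $u_j = \delta_{jm}$ on $\Gamma_m$ together with the fact that $\widetilde{d} u_k$ is closed on $\Omega$) identifies $\int_{\gamma_j} \widetilde{d} u_k$, up to sign, with the Dirichlet pairing $\int_\Omega \nabla u_j \cdot \nabla u_k\, dA$. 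The resulting Gram matrix is positive definite, since $u_1, \ldots, u_{n-1}$ are $\mathbb{R}$-linearly independent modulo constants (a relation $\sum_k a_k u_k \equiv c$ would force $c = 0$ from $\Gamma_n$ and then $a_k = 0$ from $\Gamma_k$). Hence $a_k = 0$ for every $k$, and applying the same argument to $V$ yields $b_k = 0$, proving the claim.

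To produce the points I would induct on $l$. For $l = 1$, the identity theorem provides some $z_1' \in V_1$ with $\phi_1(z_1') \ne 0$. Given $z_1', \ldots, z_{l-1}'$ that make $(\phi_k(z_j'))_{1 \le k,j \le l-1}$ non-singular, consider
\[ h(z) := \det\bigl(\phi_k(w_j)\bigr)_{1 \le k,j \le l}, \qquad w_j = z_j' \text{ for } j < l, \quad w_l = z. \]
Expanding along the last column writes $h$ as a $\mathbb{C}$-linear combination of $\phi_1, \ldots, \phi_l$ in which the coefficient of $\phi_l$ is the nonzero $(l-1) \times (l-1)$ minor; by the linear independence just established, $h \not\equiv 0$ on $\Omega$, hence $h \not\equiv 0$ on $V_l$, and any $z_l' \in V_l$ with $h(z_l') \ne 0$ completes the construction (the last sentence of the lemma is the case $l = n-1$). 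The main obstacle is the clean identification of the period matrix with the positive definite Dirichlet Gram matrix, which requires careful tracking of the orientation of $\gamma_j$ relative to $\Gamma_j$ and a correct application of Stokes'/Green's theorem.
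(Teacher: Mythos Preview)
Your argument is correct and shares the paper's inductive skeleton, but differs in how the key linear--independence input is obtained. The paper works the independence into the induction itself: assuming the $l\times l$ determinant vanishes for every $z_l'\in V_l$, it concludes that $f_l-\sum_{j<l}k_jf_j$ is constant on $V_l$, passes to ``$u_l-\sum_j k_ju_j$ is constant on $V_l$'' and hence on all of $\Omega$, and reaches a contradiction from the boundary data $u_l=1$ on $\Gamma_l$, $u_l=0$ on $\Gamma_n$. This is short and uses nothing beyond the boundary values of the $u_k$ (though the step from constancy of $f_l-\sum k_jf_j$ to that of $u_l-\sum k_ju_j$ tacitly treats the $k_j$ as real and deserves a word). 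You instead prove once and for all that $\phi_1,\dots,\phi_{n-1}$ are $\mathbb C$--linearly independent on $\Omega$: a relation forces the periods $\int_{\gamma_j}\widetilde d\,U$ to vanish, and your identification of the period matrix with the Dirichlet Gram matrix (via Green's identity and $u_j|_{\Gamma_m}=\delta_{jm}$) makes that matrix nonsingular. This costs more machinery and the orientation bookkeeping you flag, but it handles complex coefficients transparently and packages the result as a single global lemma that feeds cleanly into your cofactor--expansion step. In short, the paper trades one boundary--value contradiction at each stage of the induction, while you trade a heavier period/Gram computation up front for an entirely routine induction.
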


\begin{proof}
We  prove this lemma by induction on $l$.

When $l=1$, since $f_1$ is not constant function on $V_1$, $\frac{\partial f_1}{\partial z}(z_1')\neq 0$ at some point $z_1'\in V_1$.

Assume that this lemma has been proved for $l-1$. By induction, we can assume that 
\[\det\left(\left(\frac{\partial f_k}{\partial z}(z_j')\right)_{1\le k\le l-1,1\le j\le l-1}\right)\neq 0\]
for some point $z_j'\in V_j$ for $1\le j\le l-1$.
Hence there exists unique $l-1$ numbers $k_1,\ldots,k_{l-1}$ such that
\begin{align} \nonumber
\left(\frac{\partial f_l}{\partial z}(z_1'),\ldots,\frac{\partial f_l}{\partial z}(z_{l-1}')\right)=\sum_{1\le j\le l-1}k_j\left(\frac{\partial f_{j}}{\partial z}(z_1'),\ldots,\frac{\partial f_{j}}{\partial z}(z_{l-1}')\right).
\end{align}
For any point $z_{l}'\in V_l$, it is easy to see that the inequality
\[\det\left(\left(\frac{\partial f_k}{\partial z}(z_j')\right)_{1\le k\le l,1\le j\le l} \right)\neq 0\]
holds if and only if
\[\frac{\partial f_{l}}{\partial z}(z_{l}')\neq k_1\frac{\partial f_1}{\partial z}(z_{l}')+\cdots+k_{l-1}\frac{\partial f_{l-1}}{\partial z}(z_{l}').\]
We can now complete the induction by contradiction. Suppose that
\[\frac{\partial f_{l}}{\partial z}=k_1\frac{\partial f_1}{\partial z}+\cdots+k_{l-1}\frac{\partial f_{l-1}}{\partial z}\]
holds for any point $z_{l}\in V_l$, then
\[\frac{\partial}{\partial z}(f_l-k_1f_1-\cdots-k_{l-1}f_{l-1})=0\]
on $V_l$, which implies that $f_l-k_1f_1-\cdots-k_{l-1}f_{l-1}$ is a constant function on $V_l$. $u_{l}-k_1u_1-\cdots-k_{n-2}u_{l-1}$ is therefore a constant function on $V_l$.
Note that $u_{l}-k_1u_1-\cdots-k_{n-2}u_{l-1}$ is a harmonic function on $\Omega$. Then, it is  a constant function on $\Omega$, which is a contradiction since it takes value 1 on $\Gamma_{l}$ and $0$ on $\Gamma_n$. 
Therefore there exists a point $z_{l}'\in V_{l}$ such that
$\frac{\partial f_l}{\partial z}(z_l')\neq k_1\frac{\partial f_1}{\partial z}(z_l')+\cdots+k_{l-1}\frac{\partial f_{l-1}}{\partial z}(z_{l}'),$
which implies that
$\det \left(\left(\frac{\partial f_k}{\partial z}(z_j')\right)_{1\le k\le l,1\le j\le l}\right)\neq 0.$
\end{proof}

\section{Proofs of Theorem \ref{thm1}, Remark \ref{remark3} and Corollary \ref{coro2} }
\label{section1}
In this section, we prove Theorem \ref{thm1}, Remark \ref{remark3} and Corollary \ref{coro2}.

\subsection{Proof of Theorem \ref{thm1}.}\quad\par

Using Lemma \ref{lemma7}, there exists a harmonic function $\widetilde u_{\alpha}$ on $\Delta$ such that
\begin{equation}\label{3.1}
	\frac{1}{2\pi}\left(\widetilde u_{\alpha}(z')-\int_{\gamma_{\alpha}}\widetilde{d}G_{\Omega}(\cdot,p(z'))\right)\in\mathbb{Z}
\end{equation}
for any $z'\in \Delta\backslash{p^{-1}(\gamma_{\alpha})}$.
Lemma \ref{lemma1} shows that $\frac{1}{2\pi}\widetilde u_{\alpha}(z')$ depends only on the homology class of $\gamma_{\alpha}$ (up to addition of an integer).

By lemma \ref{lemma2}, we know that
\[\chi_{-u}=\prod_{1\le j\le m}\chi_{z_j}^{k_j+1}\]
if and only if
\begin{align}
\label{align4}
e^{-i\int_{\gamma_{\alpha}}\widetilde{d}u}=e^{i\sum_{1\le j\le m}(k_j+1)\int_{\gamma_{\alpha}}\widetilde{d}G_{\Omega}(\cdot,z_j)}
\end{align}
holds for any $\alpha\in\Pi_1$ and piecewise smooth curves $\gamma_{\alpha}\subset\Omega\backslash\{z_1,\ldots,z_m\}$ representing $\alpha$.
Using condition \eqref{3.1}, 
we know that equality \eqref{align4} is equivalent to
\[\sum_{j=1}^m\left(\frac{k_j+1}{2\pi}\right)\widetilde{u}_{\alpha}(\widetilde{z_j})+\frac{1}{2\pi}\int_{\gamma_{\alpha}}\widetilde{d}u\in \mathbb{Z}\]
for $\widetilde z_j\in p^{-1}(z_j)$, $\alpha\in\Pi_1$ and piecewise smooth closed curves $\gamma_{\alpha}$ representing $\alpha$.
Thus, following from Theorem \ref{thm6}, 
Theorem \ref{thm1}  holds.

\subsection{Proof of Remark \ref{remark3}.}\quad\par
Suppose that $\Omega$ is the unit disc $\Delta$ less a closed set of inner capacity zero and $z_0\in\Omega$ is a fixed point. Since $\Omega\subset\Delta$, we know that $G_{\Omega}(z,z_0)\ge G_{\Delta}(z,z_0)$ for $z\in\Omega\backslash\{z_0\}$.
On the other hand, a closed set of inner capacity zero is locally the pole set of a subharmonic function, and $G_{\Omega}(z,z_0)$ is a negative subharmonic function on $\Omega$,
hence $G_{\Omega}(z,z_0)$ can be extended to a negative subharmonic function on $\Delta$ (see \cite{demailly1}), which implies that $G_{\Omega}(z,z_0)\le G_{\Delta}(z,z_0)$ for $z\in\Omega\backslash\{z_0\}$. Therefore, $G_{\Omega}(z,z_0)=G_{\Delta}(z,z_0)$ for any $z\in\Omega\backslash\{z_0\}$.

Given any piecewise smooth closed curve $\gamma\subset\Omega\backslash\{z_0\}$, it is clear that
\[\int_{\gamma}\widetilde{d}G_{\Omega}(\cdot,z_0)=\int_{\gamma}\widetilde{d}G_{\Delta}(\cdot,z_0)=2m\pi,\]
where $m\in\mathbb{Z}$.
Since $z_0\in\Omega$ is arbitrarily chosen, formula \eqref{eq:1118b} implies that 
\[\frac{1}{2\pi}\widetilde u_{\alpha}(z')\in\mathbb{Z}\]
for any $\alpha\in\Pi_1$ and $z'\in\Delta$. $\frac{1}{2\pi}\widetilde u_{\alpha}$ is therefore an integer-valued function on $\Delta$ for any $\alpha\in\Pi_1$.

Now we assume that $\frac{1}{2\pi}\widetilde u_{\alpha}(z')$ are integer-valued functions on $\Delta$ for all $\alpha\in\Pi_1$. 
Recall that 
$\frac{1}{2\pi}\left(\widetilde u_{\alpha}(z')-\int_{\gamma_{\alpha}}\widetilde{d}G_{\Omega}(\cdot,p(z'))\right)\in\mathbb{Z}$
for $\alpha\in\Pi_1$, $\gamma_{\alpha}$ a closed smooth curve which can represent $\alpha$, and $z'\in \Delta\backslash{p^{-1}(\gamma_{\alpha})}$.
Therefore, we have 
\[\frac{1}{2\pi}\int_{\gamma_{\alpha}}\widetilde{d}G_{\Omega}(\cdot,z_0)\in\mathbb{Z}\]
for $\alpha\in\Pi_1$.
By Lemma \ref{lemma1}, we have $\chi_{z_0}=1$, i.e., there exists a holomorphic function $f_{z_0}$ on $\Omega$ s.t. $|f_{z_0}|=e^{G_{\Omega}(\cdot,z_0)}$, 
which implies that $\Omega$ is conformally equivalent to the unit disc less a closed set of inner capacity zero (see \cite{suita}).

\subsection{Proof of Corollary \ref{coro2}.}\quad\par

Let $V_j\subset\Delta\backslash p^{-1}(g^{-1}(0))$ be a simply connected neighborhood of $z_j'$ for $1\le j\le m$ such that $V_i\cap V_j=\emptyset$ for $i\neq j$. Therefore we can assume that $\widetilde{u}_{\alpha_k}$ is the real part of a holomorphic function $f_k$ on $V_j$ for $1\le j\le m$ and $1\le k\le m$. 
Then by Cauchy-Riemann formula,
\[\left(\frac{\partial f_k}{\partial z}(z_j')\right)_{1\le k\le m,1\le j\le m}\] 
is nondegenerate. 

For $1\le k\le m$, we define 
\[F_k(\hat{z}_1,\ldots,\hat{z}_m):=\sum_{j=1}^m f_k(\hat{z}_j),\]
where $(\hat{z}_1,\ldots,\hat{z}_m)\in V_1\times\cdots\times V_m$. Then $F_k$ is a holomorphic function on $V_1\times\cdots\times V_m$. Hence $(F_1,\ldots,F_m)$ define a holomorphic mapping from $V_1\times\cdots\times V_m$ to $\mathbb{C}^m$, and its Jacobian at $(z_1',\ldots,z_m')$ is
\[\left(\frac{\partial F_k}{\partial \hat{z}_j}(z_1',\ldots,z_m')\right)_{1\le k\le m,1\le j\le m}=\left(\frac{\partial f_k}{\partial z}(z_j')\right)_{1\le k\le m,1\le j\le m},\]
which is nondegenerate.

By the inverse mapping theorem, 
\[\left(\sum_{j=1}^m f_1(\hat{z}_j),\ldots,\sum_{j=1}^m f_m(\hat{z}_j)\right)\]
maps a neighborhood of $(z_1',\ldots,z_m')$ conformally onto a domain in $\mathbb{C}^m$,
hence there exists $m$ different points $\{\widetilde{z}'_1,\ldots,\widetilde{z}'_m\}\subset\Delta\backslash p^{-1}(g^{-1}(0))$ in a neighborhood of
$(z'_1,\ldots,z'_m)$ such that 
\[\sum_{j=1}^m\left(\frac{1}{2\pi}\right)\widetilde{u}_{\alpha_k}(\widetilde{z}_j')\in\mathbb{Q}\]
for $1\le k\le m$. We can choose $k_1+1=\cdots=k_m+1$ such that
\[\sum_{j=1}^m\left(\frac{k_j+1}{2\pi}\right)\widetilde{u}_{\alpha_{k}}(\widetilde{z}_j')\in \mathbb{Z}\]
for $1\le k\le m$. Since $u\equiv 0$ and $\frac{1}{2\pi}\widetilde{u}_{\alpha}$ are integer-valued constant functions on the unit disc $\Delta$ for other $\alpha\in\Pi_1$,
\[\sum_{j=1}^m\left(\frac{k_j+1}{2\pi}\right)\widetilde{u}_{\alpha}(\widetilde{z}_j')+\frac{1}{2\pi}\int_{\gamma_{\alpha}}\widetilde{d}u\in \mathbb{Z}\]
for all $\alpha\in\Pi_1$.
Choosing ${z}_j:=p(\widetilde{z}_j')\in\Omega\backslash g^{-1}(0)$ for $1\le j\le m$,  Corollary \ref{coro2}
holds by  Theorem \ref{thm1}.

\section{Proofs of Theorem \ref{thm2}, Remark \ref{remark1}, Corollary \ref{c:n}, Theorem \ref{thm9}, Theorem \ref{thm10} and Theorem \ref{thm11}}
\label{section2}
In this section, we prove Theorem \ref{thm2}, Remark \ref{remark1}, Corollary \ref{c:n}, Theorem \ref{thm9}, Theorem \ref{thm10} and Theorem \ref{thm11}.

\subsection{Proofs of Theorem \ref{thm2} and Remark \ref{remark1}.}\quad\par
Assume that $\Pi_1=\{[\gamma_1],\ldots,[\gamma_{n-1}]\}$. 
Recall that for $1\le k\le n-1$, $\widetilde{u}_k(z')$ is a harmonic function on the unit disc $\Delta$ satisfying the condition that
\begin{align}
\label{align1}
\frac{1}{2\pi}\left(\widetilde u_{k}(z')-\int_{\gamma_{k}}\widetilde{d}G_{\Omega}(\cdot,p(z'))\right)\in\mathbb{Z}
\end{align}
for $z'\in\Delta\backslash p^{-1}(\gamma_k)$, where $p:\Delta\rightarrow\Omega$ is the universal covering. As $\Omega$ is a domain in $\mathbb{C}$, $p_*\widetilde{u}_{k}$ 
is a single-valued harmonic function on $\Omega$ (see the appendix in \cite{G-S-Y}).
We can deduce from condition \ref{align1} that
\begin{align}
\label{align2}
\frac{1}{2\pi}\left( p_*\widetilde{u}_{k}(z)-\int_{\gamma_{k}}\widetilde{d}G_{\Omega}(\cdot,z)\right)\in\mathbb{Z}
\end{align}
for $z\in\Omega\backslash\gamma_k$.

For any $1\le k\le n-1$ and any point $z_0\in\Omega\backslash\gamma_k$, since the boundary of $\Omega$ is analytic, $G_{\Omega}(\cdot,z_0)=0$ on $\partial\Omega$, $G_{\Omega}(\cdot,z_0)$ can be extended to a harmonic function on $U\backslash\{z_0\}$ where $U$ is a 
neighborhood of $\overline{\Omega}$, thanks to the symmetry principle due to Riemann and Schwarz (see \cite{nehari}, p187).
We denote this function also by $G_{\Omega}(\cdot,z_0)$.
Since $\gamma_k$ is a closed smooth curve in $\Omega$ which winds around points in $A_k$ once, and does not wind around points in $A_l$ for $l\neq k$, 
$\gamma_k$ is therefore homologous to $\Gamma_k$ in $U$. Similar arguments as proof of lemma \ref{lemma2} imply that
\[\frac{1}{2\pi}\int_{\Gamma_k}\frac{\partial G_{\Omega}(z,z_0)}{\partial n}ds(z)-\frac{1}{2\pi}\int_{\gamma_k}\frac{\partial G_{\Omega}(z,z_0)}{\partial n}ds(z)\in\mathbb{Z}.\]
It follows from the Green third formula (see Lemma \ref{lemma10}) that  
\[u_k(z_0)=\frac{1}{2\pi}\int_{\Gamma_k}\frac{\partial G(z,z_0)}{\partial n}ds(z)\]
for $1\le k\le n-1$ and $z_0\in\Omega\backslash\gamma_k$, where $u_k\in C(\overline{\Omega})$ satisfies that $u_k=1$ on $\Gamma_k$, $u_k(z)=0$ on $\Gamma_l$ for any $l\neq k$ and $u_k$ is harmonic on $\Omega$. 
As $\Omega$ is a   planar domain, we have $$\frac{\partial v}{\partial n}ds=\widetilde{d}v$$  holds for smooth function $v$ along 
$\gamma_k$ for $1\le k\le n-1$ (see \cite{G-S-Y}).
Therefore, we have 
\begin{align}
\label{align3}
u_k(z)-\frac{1}{2\pi}\int_{\gamma_{k}}\widetilde{d}G_{\Omega}(\cdot,z)\in\mathbb{Z}
\end{align}
for $1\le k\le n-1$ and $z\in\Omega\backslash\gamma_k$. 
Conditions \eqref{align2} and  \eqref{align3} imply that
\[u_k(z)-\frac{1}{2\pi} p_*\widetilde{u}_{k}(z)\in \mathbb{Z}\]
for $1\le k\le n-1$ and $z\in\Omega\backslash\gamma_k$.

Note that $u_k(z)-\frac{1}{2\pi} p_*\widetilde{u}_{k}(z)$ is a harmonic function on $\Omega$, it is therefore a constant integer-valued function.
Remark \ref{remark1} is therefore proved, and Theorem \ref{thm2} can be proved by combining Theorem \ref{thm1} and Remark \ref{remark1}.

\subsection{Proof of Corollary \ref{c:n}}
\quad\par
Let $u_k$ $(1\le k\le n-1)$ be the harmonic function in Theorem \ref{thm2}, which is the solution of a Dirichlet problem. Thus, we have 
\begin{equation}
\label{eq:1120a}0< u_k< 1
\end{equation}
on $\Omega$ and 
\begin{equation}
	\label{eq:1120b}0< \sum_{1\le k\le n-1}u_k< 1
\end{equation}
on $\Omega$. If there exist $m$ different points $z_1,\ldots,z_m$ and  $m$ numbers $a_1,\ldots,a_m$ such that equality \eqref{eq:1120} holds, it follows from Theorem \ref{thm2} that 
$$\sum_{1\le j\le m}(k_j+1)u_{k}(z_j)\in \mathbb{Z}.$$
Following from inequality \eqref{eq:1120a} and \eqref{eq:1120b}, we have $\sum_{1\le j\le m}(k_j+1)u_{k}(z_j)\ge1$, which deduces 
\begin{equation}\nonumber
	\begin{split}
n-1&\le\sum_{1\le k\le n-1}\sum_{1\le j\le m}(k_j+1)u_{k}(z_j)\\
&=\sum_{1\le j\le m}(k_j+1)\sum_{1\le k\le n-1}u_{k}(z_j)\\
&<\sum_{1\le j\le m}(k_j+1).
	\end{split}
	\end{equation}
Then we have $n\le \sum_{1\le j\le m}(k_j+1)$.  Corollary \ref{c:n} holds.

\subsection{Proof of Theorem \ref{thm9}.}\quad\par
Using Lemma \ref{lemma9}, we can choose $m$ different points $z'_1,\ldots,z'_m$ in $\Omega$ s.t. the matrix
\[\left(\frac{\partial f_k}{\partial z}(z_j')\right)_{1\le k\le n-1,1\le j\le m}\]
has rank $n-1$, where $f_k$ is a holomorphic function on $V_0:=\cup_{j=1}^{m} V_j$ s.t.  $u_k$ is 
the real part of $f_k$ and $V_j$ is a simply-connected neighborhood of $z_j'$ s.t. $V_j\cap V_l=\emptyset$ for $j\neq l$.

For $1\le k\le n-1$, we define
\[F_{k}(\hat{z}_1,\ldots,\hat{z}_{m}):=\sum_{j=1}^{m}f_k(\hat{z}_j),\] 
which is a holomorphic function with respect to $(\hat{z}_1,\ldots,\hat{z}_{m})\in V_1\times\cdot\cdot\cdot\times V_{m}$. 
Hence $(F_1,\ldots,F_{n-1})$ defines a holomorphic mapping from $V_1\times\cdot\cdot\cdot\times V_{m}$ to $\mathbb{C}^{n-1}$. 
Note that
\[\left(\frac{\partial F_k}{\partial \hat{z}_j}(z_1',\ldots,z_m')\right)_{1\le k\le n-1,1\le j\le m}=\left(\frac{\partial f_k}{\partial z}(z_j')\right)_{1\le k\le n-1,1\le j\le m}.\]
$(F_1,\ldots,F_{n-1})$ therefore maps a neighborhood of $(z_1',\ldots,z_{m}')$ holomorphically onto an open set in $\mathbb{C}^{n-1}$, hence there exists a point
$(\widetilde{z}_1,\ldots,\widetilde{z}_{m})\in V_1\times\cdots\times V_{m}$ such that the real part of $F_k$ is a rational number at $(\widetilde{z}_1,\ldots,\widetilde{z}_m)$ for $1\le k\le n-1$, i.e., $u_k(\widetilde{z}_1)+\cdots+u_{k}(\widetilde{z}_{m})\in\mathbb{Q}$.

Choosing a positive integer $k_0$ such that $k_0(u_k(\widetilde{z}_1)+\cdots+u_{k}(\widetilde{z}_{m}))\in\mathbb{Z}$  for any $1\le k \le n-1$, and combining  with Theorem \ref{thm2}, Theorem \ref{thm9} is proved.

\subsection{Proof of Theorem \ref{thm10}.}\quad\par

Using Lemma \ref{lemma9}, we can choose $n-2$ different points $z'_1,\ldots,z'_{n-2}$ in $\Omega$ s.t. 
\[\det\left(\left(\frac{\partial f_k}{\partial z}(z_j')\right)_{1\le k\le n-2,1\le j\le n-2}\right)\neq 0,\]
 where $f_k$ is a holomorphic function on $V_0:=\cup_{j=1}^{n-2} V_j$ s.t.  $u_k$ is 
the real part of $f_k$ for $1\le k\le n-1$ and $V_j$ is a simply-connected neighborhood of $z_j'$ s.t. $V_j\cap V_l=\emptyset$ for $j\neq l$.

For $1\le k\le n-1$, we define
\[F_{k}(\hat{z}_1,\ldots,\hat{z}_{n-2}):=\sum_{j=1}^{n-2}f_k(\hat{z}_j),\]
which is a holomorphic function with respect to $(\hat{z}_1,\ldots,\hat{z}_{n-2})\in V_1\times\cdot\cdot\cdot\times V_{n-2}$. Hence $(F_1,\ldots,F_{n-2})$ defines a holomorphic mapping from $V_1\times\cdot\cdot\cdot\times V_{n-2}$ to $\mathbb{C}^{n-2}$. 
Note that
\[\left(\frac{\partial F_k}{\partial \hat{z}_j}(z_1',\ldots,z_{n-2}')\right)_{1\le k\le n-2,1\le j\le n-2}=\left(\frac{\partial f_k}{\partial z}(z_j')\right)_{1\le k\le n-2,1\le j\le n-2}.\]
Then by the inverse mapping theorem, $F:=(F_1,\ldots,F_{n-2})$ maps a neighborhood $X$ of $(z_1',\ldots,z_{n-2}')$ biholomorphically onto an open set in $\mathbb{C}^{n-2}$.

Since $F_{n-1}(\hat{z}_1,\ldots,\hat{z}_{n-2}):=\sum_{j=1}^{n-2}f_{n-1}(\hat{z}_j)$ is a holomorphic function with respect to $(\hat{z}_1,\ldots,\hat{z}_{n-2})\in V_1\times\cdots\times V_{n-2}$, $F_{n-1}\circ F^{-1}$ is therefore a holomorphic function on $F(X)\subset \mathbb{C}^{n-2}$.
As a result, $U_{n-1}\circ F^{-1}$ is a pluriharmonic function on $F(X)\subset \mathbb{C}^{n-2}$, where $U_{n-1}$ is the real part of $F_{n-1}$.

We now prove this theorem by contradiction. Suppose that for any point \newline
$(\widetilde{z}_1,\ldots,\widetilde{z}_{n-2})\in F(X)$ satisfying that the real part $\widetilde{x}_j$ of $\widetilde{z}_j$ $(\widetilde{z}_j=\widetilde{x}_j+i\widetilde{y}_j)$ is rational for $1\le j\le n-2$, $U_{n-1}\circ F^{-1}(\widetilde{z}_1,\ldots,\widetilde{z}_{n-2})$ is not rational. 
Then by the continuity of pluriharmonic functions, $U_{n-1}\circ F^{-1}(\widetilde{z}_1,\ldots,\widetilde{z}_{n-2})$ must be a constant on the set 
\[\{(\widetilde{z}_1,\ldots,\widetilde{z}_{n-2}):(\widetilde{z}_1,\ldots,\widetilde{z}_{n-2})\in F(X) \&\ \widetilde{x}_j=c_j\text{ for $1\le j\le n-2$}\},\] 
where $c_j\in\mathbb{Q}$, which implies that
\[\frac{\partial{U_{n-1}\circ F^{-1}}}{\partial \widetilde{y}_j}(\widetilde{z}_1,\ldots,\widetilde{z}_{n-2})\equiv 0\] 
for $1\le j\le n-2$ on the set 
\[\{(\widetilde{z}_1,\ldots,\widetilde{z}_{n-2}):(\widetilde{z}_1,\ldots,\widetilde{z}_{n-2})\in F(X) \&\ \widetilde{x}_j=c_j \text{ for $1\le j\le n-2$}\},\] 
where $\widetilde{y}_j$ denotes the imaginary part of $\widetilde{z}_j$. Since 
\[\{(\widetilde{z}_1,\ldots,\widetilde{z}_{n-2}):(\widetilde{z}_1,\ldots,\widetilde{z}_{n-2})\in F(X) \&\ \widetilde{x}_j\in\mathbb{Q} \text{ for $1\le j\le n-2$}\}\]
is a dense subset of $F(X)$,
\[\frac{\partial{U_{n-1}\circ F^{-1}}}{\partial \widetilde{y}_j}(\widetilde{z}_1,\ldots,\widetilde{z}_{n-2})\equiv 0\] 
on $F(X)$ for $1\le j\le n-2$.

Since ${U_{n-1}\circ F^{-1}}$ is pluriharmonic with respect to $(\widetilde{z}_1,\ldots,\widetilde{z}_{n-2})$,
\[\frac{\partial{U_{n-1}\circ F^{-1}}}{\partial \widetilde{x}_j^2}(\widetilde{z}_1,\ldots,\widetilde{z}_{n-2})\equiv 0\] 
on $F(X)$ for $1\le j\le n-2$.
As a result, shrinking $X$, we have
\[U_{n-1}\circ F^{-1}(\widetilde{z}_1,\ldots,\widetilde{z}_{n-2})=c_0+\sum_{j=1}^{n-2}c_j\widetilde{x}_j+\sum_{j_1<j_2}c_{j_1j_2}\widetilde{x}_i \widetilde{x}_j+\cdots+c_{12\ldots n-2}\widetilde{x}_1\widetilde{x}_2\cdots\widetilde{x}_{n-2},\] 
where
 $c_{j_1 j_2\ldots j_k}$ ( $1\le j_1<l_2<\cdots<l_k\le n-2$ ) is a constant for $1\le k\le n-2$. 

Recall that $U_{n-1}(\hat z_1,\ldots,\hat z_{n-2})=\sum_{j=1}^{n-2}u_{n-1}(\hat z_j)$. We can therefore conclude that
\begin{align}
\sum_{j=1}^{n-2}u_{n-1}(\hat z_j)&=c_0+\sum_{k=1}^{n-2}\left(c_k\sum_{j=1}^{n-2}u_{k}(\hat z_j)\right)+\sum_{j_1<j_2}\left(c_{j_1j_2}\sum_{j=1}^{n-2}u_{j_1}(\hat z_j)\sum_{j=1}^{n-2}u_{j_2}(\hat z_j)\right)+\cdots\notag\\
&+\left(c_{12\ldots n-2}\sum_{j=1}^{n-2}u_{1}(\hat z_j)\sum_{j=1}^{n-2}u_{2}(\hat z_j)\cdots\sum_{j=1}^{n-2}u_{n-2}(\hat z_j)\right)\notag
\end{align}
holds for $(\hat z_1,\ldots,\hat z_{n-2})\in X\subset V_1\times\cdot\cdot\cdot\times V_{n-2}$, hence for
$(\hat z_1,\ldots,\hat z_{n-2})\in\Omega^{n-2}$ by the identity property of real analytic functions. 

Letting $\hat z_j\rightarrow\Gamma_{n-1}$ for $1\le j\le n-2$, we know that $c_0=n-2$. 
Letting $z_j\rightarrow\Gamma_{n}$ for $1\le j\le n-2$, we can conclude that $0=n-2$, which contradicts to $n\ge3$.
Hence there exists a point
$(z_1'',\ldots,z_{n-2}'')\in V_1\times\cdot\cdot\cdot\times V_{n-2}$ such that the real part of $F_{k}(z_1'',\ldots,z_{n-2}'')$ is a rational number for $1\le k\le n-1$.
Choosing a positive integer $k_0$ such that  $k_0(u_k(z_1'')+\cdots+u_{k}(z_{n-2}''))\in\mathbb{Z}$  for any $1\le k \le n-1$, and combining with Theorem \ref{thm2}, Theorem \ref{thm10} is proved.

\subsection{Proof of Theorem \ref{thm11}.}\quad\par
Let $a\in (0,1)$ be a constant. For $j\in\{0,1,\ldots,m\}$, define
\[\varphi_j(z)=\frac{ae^{\frac{2\pi j\sqrt{-1}}{m+1}}-z}{1-ae^{\frac{2\pi j\sqrt{-1}}{m+1}}z}.\]
It can be checked that each $\varphi_j$ is a holomorphic automorphism of $\Delta$ and 
\[\varphi_j(\varphi_j(z))=z.\]
Denote by $\Delta(r):=\{z\in\mathbb{C}:|z|<r\}$, where $r\in (0,1)$. When $z\in \Delta(r)$, 
\[\varphi_0(z)-a=\frac{a-z}{1-az}-a=\frac{a^2z-z}{1-az},\]
which implies that
\[|\varphi_0(z)-a|\le |a+1|r=(a+1)r.\]
Similarly, when $z\in \Delta(r)$, $j\in\{0,1,\ldots,m\}$,
\[|\varphi_j(z)-ae^{\frac{2\pi j\sqrt{-1}}{m+1}}|\le |ae^{\frac{2\pi j\sqrt{-1}}{m+1}}+1|r\le (a+1)r.\]
We can fix an $r_0\in (0,1)$ so small that when $j\neq k$,
\[\varphi_j(\Delta(r_0))\cap \varphi_k(\Delta(r_0))=\emptyset.\]
It follows from $\varphi_j(\varphi_j(z))=z$ that $|\varphi_j(z)|<r$ if and only if $z\in\varphi_j(\Delta(r))$, where $r\in (0,1)$.
Let $\epsilon\in (0,r_0^{(M+1)m})$. It can be deduced from the above property that
\[\frac{\log |\varphi_j|}{\log \epsilon}\]
is a positive harmonic function on $\Delta\backslash\{ae^{\frac{2\pi j\sqrt{-1}}{m+1}}\}$ such that
\[\frac{\log |\varphi_j|}{\log \epsilon}=1\]
on $\partial\left(\varphi_j(\Delta(\epsilon))\right)$ and
\[\frac{\log |\varphi_j|}{\log \epsilon}<\frac{1}{(M+1)m}\]
on $\Delta\backslash \varphi_j(\Delta(r_0))$.

Let $D_j:=\overline{\varphi_j(\Delta(\epsilon))}$ for $j\in\{0,1,\ldots,m\}$. Let $D_{m+1},\ldots,D_{n-2}$ be arbitrary disjoint closed
disks in $\Delta\backslash\left(\cup_{j=0}^{m} D_j\right)$. Denote by $\Gamma_j:=\partial D_j$ for $j\in\{0,1,\ldots,n-2\}$, and $\Gamma_{n-1}:=\partial\Delta$. Then
\[\Omega:=\Delta\backslash\left(\cup_{j=0}^{n-2} D_j\right)\]
is an $n$-connected domain bounded by $\Gamma_0,\ldots,\Gamma_{n-1}$.

For $j\in\{0,1,\ldots,m\}$, let $u_j$ be the harmonic function on $\Omega$ such that
$u_j|_{\Gamma_j}=1$ and $u_j|_{\Gamma_k}=0$ for $k\neq j$. We know that 
\[\frac{\log |\varphi_j|}{\log \epsilon}\ge u_j\]
on $\partial\Omega$, which implies that
\[\frac{\log |\varphi_j|}{\log \epsilon}\ge u_j\]
on $\Omega$ by the property of harmonic functions. In particularly, 
\[u_j<\frac{1}{(M+1)m}\]
on $\Omega\backslash\varphi_j(\Delta(r_0)\backslash\overline{\Delta(\epsilon)})$.

Let $z_1,\ldots,z_m$ be arbitrary distinct points of $\Omega$. Since 
\[\varphi_j(\Delta(r_0))\cap \varphi_k(\Delta(r_0))=\emptyset\]
for $j\neq k$, there must exist a 
$j_0\in\{0,1,\ldots,m\}$ such that none of $z_1,\ldots,z_m$ lies in $\varphi_{j_0}(\Delta(r_0)\backslash\overline{\Delta(\epsilon)})$.
As a result, 
\[u_{j_0}(z_l)<\frac{1}{(M+1)m}\]
for $l\in\{1,\ldots,m\}$.
Let $k_1,\ldots,k_m$ be arbitrary nonnegative integers such that $k_l\le M$ for each $l$, then 
\[\sum_{l=1}^m (k_l+1)u_{j_0}(z_l)\in (0,1),\]
which is not an integer.
Theorem \ref{thm11} is therefore proved by combining the above result with Theorem \ref{thm2}.

\section{Appendix}
In this section, we assume that  $\Omega$ is a product of open Riemann surfaces, and we consider the corresponding weighted version of Suita conjecture for higher derivatives and finite points case.

Let $\Omega_j$ be an open Riemann surface for $1\le j\le n$, which admits a nontrivial Green function $G_{\Omega_j}$. Let $\Omega=\prod_{1\le j\le n}\Omega_j$ be an $n$-dimensional complex manifold, and let $\pi_j$
be the natural projection from $\Omega$ to $\Omega_j$.

Let $Z_j=\{z_{j,1},z_{j,2},\ldots,z_{j,m_j}\}\subset \Omega_j$ for any $j\in \{1,2,\ldots,n\}$, where $m_j$ is a positive integer. Let $w_{j,k}$ be a local coordinate on a neighborhood $V_{z_{j,k}}\Subset\Omega_j$ satisfying $w_{j,k}(z_{j,k})=0$
for any $j\in\{1,2,\ldots,n\}$ and $k\in\{1,2,\ldots,m_j\}$, where $V_{z_{j,k}}\cap V_{z_{j,k'}}=\emptyset$ for any $j$ and $k\neq k'$. Denote that $I_1:=\{(\beta_1,\beta_2,\ldots,\beta_n):1\le\beta_j\le m_j\}$ for any $j\in\{1,2,\ldots,n\}$, 
$V_{\beta}:=\prod_{1\le j\le n}V_{z_{j,\beta_j}}$ for $\beta=(\beta_1,\beta_2,\ldots,\beta_n)\in I_1$ and $w_{\beta}:=(w_{1,\beta_1},w_{2,\beta_2},\ldots,w_{n,\beta_n})$ is a local coordinate on $V_{\beta}$ of $z_{\beta}:=(z_{1,\beta_1},z_{2,\beta_2},\ldots,z_{n,\beta_n})\in\Omega$.

Let $\varphi_j$ be a subharmonic function on $\Omega_j$, such that $\varphi:=\sum_{1\le j\le n}\pi_j^*(\varphi_j)$ satisfy that $\varphi (z_{\beta})>-\infty$ for any $\beta\in I_1$. Denote by
\[\psi:= \max_{1\le j\le n} \left\{2\sum_{1\le k\le m_j}p_{j,k}\pi_j^*(G_{\Omega_j}(\cdot,z_{j,k}))\right\},\]
where $p_{j,k}$ is a positive real number.
Let $f$ be a holomorphic $(n,0)$ form on $\cup_{\beta\in I_1}V_{\beta}$, such that
$f=\sum_{\alpha\in E_{\beta}}d_{\beta,\alpha}w_{\beta}^{\alpha} dw_{1,\beta_1}\wedge dw_{2,\beta_2}\wedge\ldots\wedge dw_{n,\beta_n}$ on $V_{\beta}$ for any $\beta\in I_1$, where 
$E_{\beta}:=\{(\alpha_1,\alpha_2,\ldots,\alpha_n):\sum_{1\le j\le n}\frac{\alpha_j+1}{p_{j,\beta_j}}=1\& \alpha_j\in\mathbb{Z}_{\ge 0} \}$. Assume that $f=w_{\beta^*}^{\alpha_{\beta^*}} dw_{1,1}\wedge dw_{2,1}\wedge\ldots\wedge dw_{n,1}$ on $V_{\beta^*}$, where $\beta^*=(1,1,\ldots,1)$. Denote by 
\[ c_{j,k}:=\exp\lim_{z\rightarrow z_{j,k}}\left(\frac{\sum_{1\le k_1\le m_j}p_{j,k_1}G_{\Omega_j}(z,z_{j,k_1})}{p_{j,k}}-\log|w_{j,k}(z)|  \right) \]
for any $j\in\{1,2,...,n\}$ and $k\in\{1,2,...,m_j\}$.
 Let $c$ be a positive function on $(0,+\infty)$, such that $\int_0^{+\infty}c(t)e^{-t}ds<+\infty$ and $c(t)e^{-t}$ is decreasing on $(0,+\infty)$.

In \cite{G-Y3}, Guan-Yuan obtained the following result.
\begin{Theorem}
	\label{thm5}
There exists a holomorphic $(n,0)$ form F on $\Omega$ satisfying that
\[  (F-f,z_{\beta}) \in (\mathcal{O}(K_M)\otimes \mathcal{I}(\psi) )_{z_{\beta}} \]
for any $\beta\in I_1$ and
\[\int_{\Omega}|F|^2e^{-\varphi}c(-\psi)\le\left(\int_0^{+\infty}c(s)e^{-s}ds\right)\sum_{\beta\in I_1}\sum_{\alpha\in E_{\beta}}\frac{|d_{\beta,\alpha}|^2(2\pi)^n e^{-\varphi(z_{\beta})}}{\prod_{1\le j\le n}(\alpha_j+1)c_{j,\beta_j}^{2\alpha_j+2}}.\]

Moreover,  denoting $C_{f,\psi}:=\inf\{\int_{\Omega}|\widetilde{F}|^2e^{-\varphi}c(-\psi):\widetilde{F}\in H^0(\Omega,\mathcal{O}(K_M))\& (\widetilde{F}-f,z_{\beta})\in (\mathcal{O}(K_M)\otimes \mathcal{I}(\psi) )_{z_{\beta}}$ for any $\beta\in I_1\}$, equality $$C_{f,\psi}=\left(\int_0^{+\infty}c(s)e^{-s}ds \right)\sum_{\beta\in I_1}\sum_{\alpha\in E_{\beta}}\frac{|d_{\beta,\alpha}|^2(2\pi)^n e^{-\varphi(z_{\beta})}}{\prod_{1\le j\le n}(\alpha_j+1)c_{j,\beta_j}^{2\alpha_j+2}}$$
holds if and only if the following statements hold:

$(1)$ $\varphi_j=2\log|g_j|+2u_j$ for any $j\in\{1,2,\ldots,n\}$, where $u_j$ is a harmonic function on $\Omega_j$ and $g_j\in\mathcal{O}(\Omega_j)$ satisfying $g_j(z_{j,k})\neq 0$ for any $k\in\{1,2,...,m_j\}$; 

$(2)$ there exists a nonnegative integer $\gamma_{j,k}$ for any $j\in\{1,2,...,n\}$ and $k\in\{1,2,...,m_j\}$, which satisfies that $\prod_{1\le k\le m_j}\chi_{j,z_{j,k}}^{\gamma_{j,k}+1}=\chi_{j,-u_j}$ and $\sum_{1\le j\le n}\frac{\gamma_{j,\beta_j+1}}{p_{j,\beta_j}}=1$ for any $\beta\in I_1$, where $\chi_{j,z_j}$ and $\chi_{j,-u_j}$ is the characters
of $\Omega_j$ associated to the function $G_{\Omega_j}(\cdot,z_j)$ and $-u_j$ respectively;

$(3)$ $f=(c_{\beta}\prod_{1\le j\le n}w_{j,\beta_j}^{\gamma_{j,\beta_j}}+g_{\beta})dw_{1,\beta_1}\wedge dw_{2,\beta_2}\wedge\ldots\wedge dw_{n,\beta_n}$ on $V_{\beta}$ for any $\beta\in I_1$, where $c_{\beta}$ is a constant and $g_{\beta}$ is a holomorphic function on $V_{\beta}$, such that $(g_{\beta},z_{\beta})\in \mathcal{I}(\psi)_{z_{\beta}}$;

$(4)$ $\lim_{z\rightarrow z_{\beta}}\frac{c_{\beta}\prod_{1\le j\le n}w_{j,\beta_j}^{\gamma_{j,\beta_j}} dw_{1,\beta_1}\wedge dw_{2,\beta_2}\wedge\ldots\wedge dw_{n,\beta_n} }{\bigwedge_{1\le j\le n}\pi_j^*\left(g_j(P_j)_*\left(f_{u_j}\left(\prod_{1\le k\le m_j}f_{z_j,l}^{\gamma_{j,k}+1}\right)\left(\sum_{1\le k\le m_j}p_{j,k}\frac{df_{z_{j,k}}}{f_{z_j,k}}\right)\right)\right)}=c_0$
for any $\beta\in I_1$, where $P_j:\Delta\rightarrow\Omega_j$ is the universal covering map, $c_0\in\mathbb{C}\backslash\{0\}$ is a constant independent of $\beta$, and $f_{u_j}$ is a holomorphic function on $\Delta$, such that $|f_{u_j}|=P_j^*(e^{u_j})$ and $f_{z_j,k}$ is a holomorphic function on $\Delta$ such that 
$|f_{u_j}|=P_j^*(e^{G_{\Omega_j}(\cdot,z_{j,k})})$ for any $j\in\{1,2,\ldots,n\}$ and $k\in\{1,2,\ldots,m_j\}$.

\end{Theorem}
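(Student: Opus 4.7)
The plan is to derive both the existence/estimate and the equality characterization from the concavity property of minimal $L^{2}$ integrals with measurable gain (Guan--Mi--Yuan), combined with the multi-point extension of Theorem \ref{thm6} applied on each single factor $\Omega_{j}$. The product structure of $\Omega$ together with the max-type weight $\psi$ is the structural feature that glues the factor-wise results into the multi-dimensional statement.

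For the existence and the upper bound on $C_{f,\psi}$, I would set $C_{f,\psi}(t):=\inf\{\int_{\{\psi<-t\}}|\widetilde F|^{2}e^{-\varphi}c(-\psi):\widetilde F$ satisfies the same jet conditions at every $z_{\beta}\}$ and invoke the concavity property to show that $G\circ h^{-1}$ is concave, where $h(t)=\int_{t}^{+\infty}c(s)e^{-s}\,ds$ and $G$ is a suitable normalization of $C_{f,\psi}$. Near $z_{\beta}$, the asymptotic $\psi\sim 2\max_{j}\{p_{j,\beta_{j}}\log|w_{j,\beta_{j}}|\}-2\sum_{j}p_{j,\beta_{j}}\log c_{j,\beta_{j}}$ yields a toric local model; a direct computation in that model shows that the minimal $L^{2}$ norm of the monomial $w_{\beta}^{\alpha}\,dw_{1,\beta_{1}}\wedge\cdots\wedge dw_{n,\beta_{n}}$ against $c(-\psi)e^{-\varphi}$ is $\frac{(2\pi)^{n}e^{-\varphi(z_{\beta})}}{\prod_{j}(\alpha_{j}+1)c_{j,\beta_{j}}^{2\alpha_{j}+2}}\cdot\int_{0}^{+\infty}c(s)e^{-s}\,ds$ for $\alpha\in E_{\beta}$, the hyperplane $\sum_{j}(\alpha_{j}+1)/p_{j,\beta_{j}}=1$ arising as the boundary of $\mathcal I(\psi)_{z_{\beta}}$ in the monomial lattice. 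Summing over $\beta\in I_{1}$ and $\alpha\in E_{\beta}$ and letting $t\to+\infty$ in the concavity inequality gives the stated bound; existence of the extremal $F$ follows by a standard weak-limit argument.

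For the equality characterization, equality forces the extremal $F$ to scale prescribedly on the sublevels $\{\psi<-t\}$. Slicing $F$ along generic fibres of each projection $\pi_{j}:\Omega\to\Omega_{j}$ produces $(1,0)$-forms on $\Omega_{j}$ that realize equality in the multi-point version of Theorem \ref{thm6}. Factor by factor this yields condition $(1)$ ($\varphi_{j}=2\log|g_{j}|+2u_{j}$) and nonnegative integers $\gamma_{j,k}$ satisfying $\prod_{k}\chi_{j,z_{j,k}}^{\gamma_{j,k}+1}=\chi_{j,-u_{j}}$, which is the character half of $(2)$. The balance identity $\sum_{j}(\gamma_{j,\beta_{j}}+1)/p_{j,\beta_{j}}=1$ is then forced because the leading $z_{\beta}$-jet of the extremizer must lie on $E_{\beta}$: a component strictly above carries extra $L^{2}$ mass and breaks equality, one strictly below falls outside the extension class $\mathcal I(\psi)$. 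With $(1)$ and $(2)$ in hand, the extremizer is explicitly the wedge product $c_{0}\bigwedge_{j}\pi_{j}^{*}\bigl(g_{j}(P_{j})_{*}(f_{u_{j}}\prod_{k}f_{z_{j,k}}^{\gamma_{j,k}+1}\sum_{k}p_{j,k}\,df_{z_{j,k}}/f_{z_{j,k}})\bigr)$, and matching this against the prescribed local expansion of $f$ at each $z_{\beta}$ delivers $(3)$ and $(4)$ with the same constant $c_{0}$ across all $\beta\in I_{1}$.

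The main obstacle I expect is dealing with the max-type weight $\psi$, which is not a pullback from any single factor and therefore obstructs a direct Fubini reduction. In particular, the toric local model at $z_{\beta}$, the identification of $E_{\beta}$ as the set of ``leading'' monomials modulo $\mathcal I(\psi)$, and the use of the concavity property on the whole of $\Omega$ (rather than factor-by-factor) have to cooperate. Extracting from the global equality the factor-wise equality that feeds into Theorem \ref{thm6} on each $\Omega_{j}$, via the slicing argument, is the most delicate point and is where the single-variable rigidity must be leveraged together with the product concavity.
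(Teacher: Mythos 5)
The paper does not prove Theorem~\ref{thm5}. The statement is imported verbatim from the reference \cite{G-Y3}, as announced in the sentence immediately preceding it (``In \cite{G-Y3}, Guan--Yuan obtained the following result.''), so there is no in-paper proof to compare your proposal against; the paper only \emph{uses} Theorem~\ref{thm5} to derive Theorem~\ref{thm4}.

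On the content of your sketch: the overall framework you describe --- concavity of the minimal $L^2$ integral after the reparametrization $t\mapsto\int_t^{+\infty}c(s)e^{-s}\,ds$, a toric local computation near $z_\beta$ identifying $E_\beta$ as the set of leading monomials modulo $\mathcal{I}(\psi)_{z_\beta}$, and obtaining the estimate from the concavity inequality --- is consistent with the Guan--Mi--Yuan/Guan--Yuan machinery that \cite{G-Y3} is built on, so that part is plausible. However, the equality step as written has a genuine gap. The assertion that ``slicing $F$ along generic fibres of each projection $\pi_j$ produces $(1,0)$-forms on $\Omega_j$ that realize equality in Theorem~\ref{thm6}'' does not follow. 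The weight $\psi=\max_j\bigl\{2\sum_{k}p_{j,k}\pi_j^*(G_{\Omega_j}(\cdot,z_{j,k}))\bigr\}$ is a max of pullbacks and does not restrict, on a one-variable slice with the other coordinates frozen, to the single-factor weight $2\sum_k p_{j,k}G_{\Omega_j}(\cdot,z_{j,k})$ appearing in Theorem~\ref{thm6}; the $\max$ coupling with the frozen factors persists. Moreover the minimal $L^2$ integral is defined as an infimum, and restriction of an extremizer does not in general remain extremal for the restricted problem, so equality on $\Omega$ does not transfer factor by factor without a separate argument. You flag this as ``the most delicate point,'' but the proposal leaves it unaddressed, and without it conditions $(1)$ and $(2)$ are not obtained. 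If one wants to exploit the single-factor rigidity, the natural order is reversed: first use the linearity criterion for $G(h^{-1}(r))$ on $\Omega$ to force the extremizer to be a wedge of pullbacks (which is, in substance, condition $(4)$), and only then does a reduction to the single-factor statement become available.
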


For each $1\le j\le n$, let $\Pi_{1,j}$ be a subset of the fundamental group $\pi_1(\Omega_j)$ which generates $\pi_1(\Omega_j)$ (when $\Omega_j$ is simply connected, set $\Pi_{1,j}=\{0\}$). 
For every $\alpha\in \Pi_{1,j}$, $\gamma_{\alpha}$ denotes a piecewise smooth closed curve on $\Omega_j$, which represents $\alpha$.
Let $P_j:\Delta\rightarrow\Omega_j$ be the universal covering of $\Omega_j$.
By Lemma \ref{lemma7}, there exists a harmonic function $\widetilde u_{\alpha}$ on $\Delta$ such that
\begin{equation}\label{eq:9}
	\frac{1}{2\pi}\left(\widetilde u_{\alpha}(z')-\int_{\gamma_{\alpha}}\widetilde{d}G_{\Omega}(\cdot,P_j(z'))\right)\in\mathbb{Z}\notag
\end{equation}
for any $z'\in \Delta\backslash{P_j^{-1}(\gamma_{\alpha})}$. 

Now we assume that the following statements holds:

$(1)$ $\varphi_j=2\log|g_j|+2u_j$ for any $j\in\{1,2,\ldots,n\}$, where $u_j$ is a harmonic function on $\Omega_j$ and $g_j\in\mathcal{O}(\Omega_j)$ satisfying $g_j(z_{j,k})\neq 0$ for any $k\in\{1,2,\ldots,m_j\}$; 

$(2)$ there exists a nonnegative integer $\gamma_{j,k}$ for any $j\in\{1,2,\ldots,n\}$ and $k\in\{1,2,\ldots,m_j\}$, which satisfies that $\sum_{1\le j\le n}\frac{\gamma_{j,\beta_j+1}}{p_{j,\beta_j}}=1$ for any $\beta\in I_1$. Let $f=(c_{\beta}\prod_{1\le j\le n}w_{j,\beta_j}^{\gamma_{j,\beta_j}})dw_{1,\beta_1}\wedge dw_{2,\beta_2}\wedge\ldots\wedge dw_{n,\beta_n}$ on $V_{\beta}$ for any $\beta\in I_1$, where $c_\beta$ is a constant.

Then we have the following result.
\begin{Theorem}
\label{thm4}
There exists $\{c_\beta\}_{\beta\in I_1}$ such that equality $$C_{f,\psi}=\left(\int_0^{+\infty}c(s)e^{-s}ds \right)\sum_{\beta\in I_1}\sum_{\alpha\in E_{\beta}}\frac{|d_{\beta,\alpha}|^2(2\pi)^n e^{-\varphi(z_{\beta})}}{\prod_{1\le j\le n}(\alpha_j+1)c_{j,\beta_j}^{2\alpha_j+2}}$$ 
holds if and only if for each $1\le j\le n$,
\begin{equation}
\label{eq8}\sum_{k=1}^{m_j}\left(\frac{\gamma_{j,k}+1}{2\pi}\right)\widetilde{u}_{\alpha}(\widetilde{z_{j,k}})+\frac{1}{2\pi}\int_{\gamma_{\alpha}}\widetilde{d}u_j\in \mathbb{Z} 
\end{equation}
holds for any $\alpha\in \Pi_{1,j}$ and $\widetilde{z_{j,k}}\in p_j^{-1}(z_{j,k})$ for $1\le k\le m_j$. 
\end{Theorem}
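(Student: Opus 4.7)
The plan is to apply Theorem \ref{thm5} directly and verify that its four equality-characterization conditions reduce to the stated integral condition \eqref{eq8} plus a specific choice of $\{c_\beta\}$. By the standing hypotheses of Theorem \ref{thm4}, statement (1) of Theorem \ref{thm5} holds, the dimensional part of statement (2) holds, and the ansatz $f = c_\beta \prod_j w_{j,\beta_j}^{\gamma_{j,\beta_j}}\, dw_{1,\beta_1}\wedge\cdots\wedge dw_{n,\beta_n}$ on $V_\beta$ matches statement (3) with $g_\beta=0$. Therefore the existence of $\{c_\beta\}$ making equality hold is equivalent to the simultaneous validity of (a) the character identity $\prod_k \chi_{j,z_{j,k}}^{\gamma_{j,k}+1} = \chi_{j,-u_j}$ for each $1\le j\le n$, and (b) a choice of $\{c_\beta\}$ for which the limit in statement (4) equals some $c_0 \neq 0$ independent of $\beta$.

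The key observation is that condition (a) does not involve $\{c_\beta\}$ and is precisely the condition ensuring that $(P_j)_*\bigl(f_{u_j}\prod_k f_{z_{j,k}}^{\gamma_{j,k}+1} \sum_k p_{j,k}\, df_{z_{j,k}}/f_{z_{j,k}}\bigr)$ descends to a single-valued meromorphic $(1,0)$ form on $\Omega_j$; indeed the multiplicative function $f_{u_j}\prod_k f_{z_{j,k}}^{\gamma_{j,k}+1}$ has character $\chi_{u_j}\prod_k \chi_{z_{j,k}}^{\gamma_{j,k}+1}$, which is trivial iff (a) holds. Assuming (a), I would expand this form locally near each $z_{j,\beta_j}$: since $f_{z_{j,\beta_j}}$ vanishes to first order at $\widetilde{z_{j,\beta_j}}\in P_j^{-1}(z_{j,\beta_j})$ while $f_{z_{j,k}}$ is non-vanishing there for $k\ne\beta_j$, and $\sum_k p_{j,k}\,df_{z_{j,k}}/f_{z_{j,k}}$ has a simple pole at $z_{j,\beta_j}$ with residue $p_{j,\beta_j}$, the product is holomorphic with non-zero leading coefficient times $w_{j,\beta_j}^{\gamma_{j,\beta_j}} dw_{j,\beta_j}$. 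Wedging over $j$ and multiplying by the non-vanishing $g_j(z_{j,\beta_j})$ shows the denominator in statement (4) equals $A_\beta \prod_j w_{j,\beta_j}^{\gamma_{j,\beta_j}} dw_{1,\beta_1}\wedge\cdots\wedge dw_{n,\beta_n}$ to leading order, with $A_\beta\ne 0$. Hence the limit in statement (4) equals $c_\beta/A_\beta$, and setting $c_\beta := c_0 A_\beta$ for any fixed $c_0\ne 0$ realizes (b). Thus the existence of $\{c_\beta\}$ making equality hold reduces exactly to (a).

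To finish, I would translate (a) into the integral condition \eqref{eq8} factor by factor. For each fixed $j$, Lemma \ref{lemma2} applied to $\Omega_j$ with $u = u_j$, $k_l = \gamma_{j,l}$, and $\Pi_1 = \Pi_{1,j}$ shows that (a) for that $j$ is equivalent to
\[
e^{-i\int_{\gamma_\alpha}\widetilde{d}u_j} \;=\; e^{\,i\sum_{k}(\gamma_{j,k}+1)\int_{\gamma_\alpha}\widetilde{d}G_{\Omega_j}(\cdot,z_{j,k})}
\]
for every $\alpha \in \Pi_{1,j}$ and every piecewise smooth closed curve $\gamma_\alpha$ in $\Omega_j \setminus Z_j$ representing $\alpha$. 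Using the defining property \eqref{eq:9} of $\widetilde{u}_\alpha$ to rewrite each $\int_{\gamma_\alpha}\widetilde{d}G_{\Omega_j}(\cdot, z_{j,k})$ as $\widetilde{u}_\alpha(\widetilde{z_{j,k}})$ modulo $2\pi\mathbb{Z}$ converts the displayed identity precisely into \eqref{eq8}.

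I expect the main technical obstacle to be the local expansion argument in the second paragraph: one needs to keep track of the vanishing and pole orders carefully enough to confirm simultaneously that the denominator in statement (4) is well-defined (requiring (a)), that its leading coefficient $A_\beta$ is non-zero, and that $c_\beta$ can be scaled to make all $\beta$-limits coincide. Everything else parallels the single-factor reasoning already carried out in the proof of Theorem \ref{thm1}, now executed independently in each coordinate $\Omega_j$ of the product.
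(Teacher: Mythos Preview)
Your proposal is correct and follows essentially the same route as the paper: both arguments reduce the existence of suitable $\{c_\beta\}$ to the character identity $\prod_k \chi_{j,z_{j,k}}^{\gamma_{j,k}+1}=\chi_{j,-u_j}$ via Theorem~\ref{thm5}, and then translate that identity into condition~\eqref{eq8} using Lemma~\ref{lemma2} and the defining property of $\widetilde{u}_\alpha$. The only difference is that you spell out explicitly the local expansion showing that, once the character identity holds, the denominator in statement~(4) of Theorem~\ref{thm5} has a nonzero leading coefficient $A_\beta$ so that $c_\beta := c_0 A_\beta$ realizes statement~(4); the paper's proof leaves this step implicit in the phrase ``Combining this with Theorem~\ref{thm5}.''
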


\begin{proof}
	By lemma \ref{lemma2}, we know that for each $1\le j\le n$,
	\[\chi_{-u_j}=\prod_{1\le k\le m_j}\chi_{z_{j,k}}^{\gamma_{j,k}+1}\]
	if and only if
	\begin{align}
	\label{align5}
	e^{-i\int_{\gamma_{\alpha}}\widetilde{d}u_j}=e^{i\sum_{1\le k\le m_j}(\gamma_{j,k}+1)\int_{\gamma_{\alpha}}\widetilde{d}G_{\Omega}(\cdot,z_{j,k})}
	\end{align}
	holds for any $\alpha\in\Pi_{j,1}$ and piecewise smooth curves $\gamma_{\alpha}\subset\Omega_j\backslash\{z_{j,1},\ldots,z_{j,m_j}\}$ representing $\alpha$.
	By condition \eqref{3.1}, 
	we know that for each $1\le j\le n$, equality \eqref{align5} holds if and only if
	\[\sum_{k=1}^{m_j}\left(\frac{\gamma_{j,k}+1}{2\pi}\right)\widetilde{u}_{\alpha}(\widetilde{z_{j,k}})+\frac{1}{2\pi}\int_{\gamma_{\alpha}}\widetilde{d}u_j\in \mathbb{Z}\]
	holds for any $\alpha\in \Pi_{1,j}$ and $\widetilde{z_{j,k}}\in p_j^{-1}(z_{j,k})$ for $1\le k\le m_j$. 
	Combining this with Theorem \ref{thm5}, we know that
	Theorem \ref{thm4}  holds.
\end{proof}

\begin{Remark}
Similarly to Theorem \ref{thm4}, Theorem \ref{thm2}, \ref{thm9} and \ref{thm10}
can also be generalized to the corresponding product version on $\Omega=\prod_{1\le j\le n}\Omega_j$.
\end{Remark}

%%%------------------------------------------------------------------------

\vspace{.1in} {\em Acknowledgements}. The authors would like to thank  Dr. Shijie Bao and Dr. Zhitong Mi for checking the manuscript. 
The first named author was supported by National Key R\&D Program of China 2021YFA1003100 and NSFC-12425101. The third author was supported by China Postdoctoral Science Foundation BX20230402 and 2023M743719.

\bibliographystyle{references}
\bibliography{xbib}

\end{document}